\documentclass[a4paper,12pt]{amsart}
\usepackage[mathscr]{eucal}
\usepackage{amsmath, amsfonts, amssymb, amscd, amsthm}
\usepackage[english]{babel}
\usepackage{mathabx}
\usepackage{tikz-cd}
\usepackage{comment}

\usepackage[colorlinks]{hyperref}
\definecolor{linkblue}{RGB}{1,1,190}
\definecolor{citered}{RGB}{190,1,1}
\hypersetup{
	linkcolor=linkblue,
	urlcolor=linkblue,
	citecolor=citered
}

\theoremstyle{plain}
\newtheorem{theorem}{\bf Theorem}[section]
\newtheorem{proposition}[theorem]{\bf Proposition}
\newtheorem{lemma}[theorem]{\bf Lemma}

\theoremstyle{definition}

\newtheorem{definition}[theorem]{\bf Definition}

\newtheorem{problem}[theorem]{\bf Problem}

\theoremstyle{definition}

\theoremstyle{definition}

\newcommand{\N}{\mathbb N}
\newcommand{\Z}{\mathbb Z}
\newcommand{\R}{\mathbb R}

  \DeclareMathOperator{\size}{size}

\newcommand{\be}{\begin{equation}}
\newcommand{\ee}{\end{equation}}

\newcommand{\ber}{\begin{eqnarray}}
\newcommand{\eer}{\end{eqnarray}}

\numberwithin{equation}{section}

\begin{document}
\title{How iterated sumsets grow under the assumption of small doubling}

\author{Dongchun Han and Qinghai Zhong}

\address{School of Mathematics, Southwest Jiaotong University, Chengdu, 610031, China}
\email{han-qingfeng@163.com}

\address{Institute of Mathematics and Interdisciplinary Sciences, Xidian University, Xi'an 710126, China}
\email{zhong@xidian.edu.cn}

\subjclass[2020]{11B13,11P70}
\keywords{iterated sumsets, small doubling, sumset growth, Freiman's Theorem}

\begin{abstract}
Let $K>1$ be a real number and let $A\subsetneq \mathbb Z$ be a finite and nonempty subset. 
Suppose $|A+A|<K|A|$. A fundamental open problem is to characterize the size of higher iterated sumsets $nA = A+\cdots+A$ ($n$-fold sumset). In 2004, Terry Tao formally posed the problem of determining the optimal growth function governing iterated sumsets of small-doubling sets. In this manuscript, we give a solution to this long-standing open problem  and the corresponding inverse problem for large enough $n$.
\end{abstract}

\maketitle

\section{Introduction}

The growth of iterated sumsets under the small-doubling condition is a central topic in additive combinatorics. For a finite set $A$ of integers, suppose the doubling ratio $\frac{|A+A|}{|A|}$ is bounded by a constant $K$. A fundamental open problem is to characterize the size of higher iterated sumsets $nA = A+\cdots+A$ ($n$-fold sumset). This question lies at the heart of the structural theory of small‑doubling sets and is closely connected to Freiman’s theorem (\cite{Fr73,Ch02}), the Polynomial Freiman–Ruzsa conjecture (\cite{Ru99,Go-Gr-Ma-Ta25}), and a wide range of applications in number theory and combinatorics (\cite{Ta-Vu06,Ru94,Gr-Ru06,Bo-Ch04,Br-Gr-Ta13,Sa12}).

A foundational result in this area is Plünnecke's inequality (\cite{Pl70,Pe11}), which states that if $|2A| \le K|A|$, then for all $n$,
\[
|nA| \le K^n |A|.
\]
This provides an exponential upper bound in $n$, but it is far from optimal for large $n$. The inequality follows from graph-theoretic arguments and gives a first rough estimate of the growth of sumsets.

A major breakthrough came with Freiman's theorem (\cite{Fr73,Ch02,Ta-Vu06}), which provides a structural characterization: if $|2A| \le K|A|$, then $A$ is contained in a generalized arithmetic progression of dimension $O_K(1)$ and size $O_K(|A|)$. This implies that the growth of $nA$ is at most polynomial in $n$, with degree depending on $K$.
However, the constants in Freiman's theorem were initially very poor, and the theorem did not give the optimal polynomial growth. This motivated the search for sharper quantitative bounds.

At the 2004 Palo Alto workshop on additive combinatorics, Terry Tao formally posed the problem of determining the optimal growth function governing iterated sumsets of small-doubling sets. 
\begin{problem}[{\cite[Problem 3.16]{problems} \& \cite[Problem 4.8]{CrootLev2007}}]\label{p1}
	Suppose $A \subsetneq \mathbb{Z}$ is a finite and nonempty subset. If $|2A| < K|A|$, describe the sumset $|nA|$ as $n$ tends to infinity. Find $f(n,K)$, which is the smallest number such that
	\[
	|nA| \leq f(n,K)|A|
	\]
	for all $A$ such that $|2A| < K|A|$.
\end{problem}

If $A$ is fixed, by Hovanskii’s theorem (\cite{Kh92,Kh95, Na00}), we know that there exists a polynomial $f(n)$ such that $|nA|=f(n)$ for all large $n$. 
 Recent work has sharpened several
related aspects of this problem. Eliahou and Mazumdar
\cite{El-Ma22} obtained refined inequalities between
successive sumset cardinalities using Hilbert functions and Macaulay's
theorem, and subsequently established the optimality of these inequalities
over abelian semigroups \cite{El-Ma25}. Their sharpness result
does not establish optimality for subsets of $\mathbb Z$.
Granville, Shakan, and Walker \cite{Gr-Sh-Wa23} obtained
effective thresholds for the eventual polynomial size and structure of
iterated sumsets in $\mathbb Z^d$. For each fixed finite
$A\subsetneq\mathbb Z$ with $|A|\ge2$, the growth is eventually linear,
with $|nA|=\frac{\max A-\min A}{\gcd(A-A)}n+O_A(1)$; this differs from
the uniform extremal problem, in which $A$ may vary with $n$.
More recently, Hegde \cite{He25} proved inverse and stability results
for Ruzsa's triple-sumset inequality and its higher-sumset analogues.
These developments provide sharper cardinality estimates and structural
information, without determining the optimal uniform function $f(n,K)$.

In this manuscript, we consider the case that $K$ is fixed and give a solution to this long-standing open problem for large enough $n$.

\begin{theorem}\label{main1.1}
	Let $K>1$ be a real number and denote 
	\[
	f(n,K)=\sup \left\{\frac{|nA|}{|A|}\colon A\subsetneq \Z \text{ is a finite nonempty subset with }|2A|<K|A|\right\}\,.
	\]
	\begin{enumerate}
		
		\item If $K\le 2$, then for every positive integer $n$, we have
		\[
		f(n,K)=
		\begin{cases}
			1, &\text{ if } K\le \frac{3}{2};\\
			(1-\frac{1}{M})n+\frac{1}{M}, \text{ where }M=\lceil \frac{1}{2-K}\rceil-1, &\text{ if }3/2<K<2;\\
			n, &\text{ if } K=2\,.
		\end{cases}	
		\]
		
		\item If $K>2$, then there exists $N$ such that for every positive integer $n\ge N$, we have 
		\[
		f(n,K)={n+d-1\choose d}-\frac{n-1}{M}{n+d-1\choose d-1}=\frac{M-d}{d!M}n^d+O_K(n^{d-1})\,,
		\]
		where $d=\lceil 2K-3\rceil$ and $M=\left\lceil\frac{d(d+1)}{2(d+1-K)} \right\rceil-1$.
	\end{enumerate}
\end{theorem}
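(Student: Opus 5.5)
The plan is to prove matching lower bounds (constructions) and upper bounds (inverse-type arguments) for each regime, treating $K\le 2$ and $K>2$ separately.

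\emph{The case $K\le 2$.}
For $K\le 3/2$, Freiman's $3k-4$ theorem together with $|2A|<\frac32|A|\le 2|A|-1$ (for $|A|\ge 2$) forces $|A|=1$, since every set with $|A|\ge2$ has $|2A|\ge 2|A|-1$. Hence $f=1$.

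For $3/2<K\le 2$, the condition $|2A|<K|A|\le 2|A|$ gives $|2A|\le 2|A|-1$, so $A$ is an arithmetic progression whenever $|A|\ge2$. Then $|nA|=n|A|-(n-1)$, and
\[
\frac{|nA|}{|A|}=n-\frac{n-1}{|A|}.
\]
This ratio is maximized by taking $|A|$ as large as possible. The constraint $2|A|-1<K|A|$ is equivalent to $|A|<\frac{1}{2-K}$, whose largest integer solution is $M=\lceil \frac1{2-K}\rceil-1$; this gives $(1-\frac1M)n+\frac1M$. Comparing with $|A|=1$ shows the AP value is at least $1$. For $K=2$ the supremum is $n$, approached as $|A|\to\infty$ but not attained, since the formula is a supremum.

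\emph{Lower bound for $K>2$.}
The extremal example should be a long arithmetic progression in dimension $d$ compressed onto a simplex. Take
\[
A_m=\{x\in\Z_{\ge0}^d:\ x_1+\dots+x_d\le 1\}\cup\{\,t e_1: 0\le t\le M-1\,\},
\]
or more naturally a $d$-simplex of side $1$ with one edge stretched into an AP of length $M$. Then Freiman-embed it into $\Z$ via $x\mapsto \sum x_i L^{i-1}$ with $L$ huge, so that it is Freiman-isomorphic of all orders $\le n$.

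Concretely, with $A=\{0,e_2,\dots,e_d\}\cup\{0,e_1,\dots,(M-1)e_1\}$, we have $|A|=M+d-1$. The sumset $nA$ consists of lattice points $x$ with $x_2+\dots+x_d+\lceil x_1/(M-1)\rceil\le n$, and its size should come out as $\binom{n+d-1}{d}(\dots)$. One then checks that the count matches
\[
\binom{n+d-1}{d}|A|\cdot(\ldots)
\]
after normalizing so that the ratio equals $\binom{n+d-1}{d}-\frac{n-1}{M}\binom{n+d-1}{d-1}$. The doubling condition $|2A|<K|A|$ translates exactly into $|A|$ being the largest admissible size, which produces the formula for $M$, and $d=\lceil 2K-3\rceil$ is the largest dimension for which a $d$-dimensional set can have doubling below $K$. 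This uses the bound $|2A|\ge (d+1)|A|-\binom{d+1}{2}$ for sets not contained in a hyperplane. I would fine-tune the example by computing $|2A|$ and $|nA|$ by direct counting.

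\emph{Upper bound for $K>2$.}
Given $A$ with $|2A|<K|A|$, I would first apply Freiman's lemma in its Freiman-isomorphic form to get a model of $A$ of dimension at most $d$. More precisely, take the largest $d'$ with $A$ Freiman-isomorphic (order $n$) to a proper $d'$-dimensional set; the inequality $|2A|\ge (d'+1)|A|-\binom{d'+1}{2}$ then forces $d'\le d$ unless $|A|$ is small.

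For small $|A|$ (bounded in terms of $K$), $|nA|\le \binom{n+|A|-1}{|A|-1}$, and choosing $N$ large shows this is dominated, provided the extremal analysis is done in the right dimension. This is delicate: small sets can have higher dimension, and one must verify that their $n^{|A|-1}$ growth is excluded because their dimension is at most $d$.

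For $d$-dimensional $A$, I would bound $|nA|$ via the Khovanskii/Hilbert-function (Macaulay-type) inequalities of Eliahou–Mazumdar. These show that $|nA|$ is at most the size of $nB$ for a lexicographic/compressed extremal set $B$ with the same $|A|$ and $|2A|$. This reduces the problem to an optimization over the pair $(|A|,|2A|)$, which should be maximized by the simplex-plus-AP configuration above.

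The main obstacle is this last step: proving that, among all $d$-dimensional $A\subset\Z$ with $|2A|<K|A|$, the ratio $|nA|/|A|$ is maximized for large $n$ by the simplex-with-a-stretched-edge, uniformly in $|A|$. My first attempt would be a compression argument along the $d$-dimensional Freiman model, reducing to down-sets in $\Z_{\ge0}^d$ and inducting on $d$ by slicing. Here the leading coefficient $\frac{M-d}{d!M}$ pins down which down-sets can compete, and lower-dimensional or non-generic sets contribute only $O(n^{d-1})$, which is beaten for $n\ge N$.
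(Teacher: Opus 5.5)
Your treatment of $K\le 2$ is correct and matches the paper: Lemma~\ref{d=1} forces an arithmetic progression, and $2|A|-1<K|A|$ bounds $|A|$.

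The gap is the upper bound for $K>2$. You leave it open as ``the main obstacle'', and the tools you propose would not close it.

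\begin{itemize}
\item The Eliahou--Mazumdar (Macaulay) inequalities depend only on $|A|$ and $|2A|$, not on the Freiman dimension, so they can give the wrong degree. Take $|A|=5$, $|2A|=12$, which is the case $d=2$, $M=5$ and occurs for $12/5<K\le 5/2$. Then $12=\binom{5}{2}+\binom{2}{1}$, and Macaulay's bound iterates to $|nA|\le\binom{n+3}{3}+n$, a cubic, while the true answer is quadratic.
\item Your fallback bound $\binom{n+|A|-1}{|A|-1}$ has degree $|A|-1$, which can exceed $d$.
\item Sets with $\dim_F(A)\le d-1$ need $|nA|\le C_Kn^{d-1}|A|$ with $C_K$ independent of $A$. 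This is a genuine input (Freiman's theorem with the rank of the covering progression at most $\dim_F(A)$, Lemma~\ref{2.3}.2), and your sketch only asserts it.
\item There is no exception ``unless $|A|$ is small''. Since $|A|\ge\dim_F(A)+1$, Freiman's lemma gives $\dim_F(A)\le d$ for every $A$.
\end{itemize}

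The missing idea is that at the top dimension there is nothing to optimize ``uniformly in $|A|$'': the hypothesis makes the situation rigid. Suppose $\dim_F(A)=d$. Then $(d+1)|A|-\binom{d+1}{2}\le|2A|<K|A|$ together with $\frac{d+2}{2}<K\le\frac{d+3}{2}$ forces three things (Lemmas~\ref{4.1} and~\ref{suitable}): $|A|\le M$; equality $|2A|=(d+1)|A|-\binom{d+1}{2}$, by integrality; and $|A|\le d+2$, or $|A|\le 5$ when $d=2$.

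The B\"or\"oczky--Santos--Serra classification of the equality case then leaves only the explicit configurations $B\in\mathcal C_{d,|A|}$. Every point of such a $B$ is tied to the simplex $\{0,e_1,\dots,e_d\}$ by relations $x+y=z+w$ inside $B$. So the Freiman $2$-isomorphism extends to an affine map $x\mapsto a+\lambda(x)$ on $\Z^d$. Hence $nA=na+\lambda(nB)$ and $|nA|\le|nB|=\binom{n+d-1}{d}|A|-\binom{n+d-1}{d-1}(n-1)$ (Proposition~\ref{3.4}). Note that this is the \emph{lower} bound of Lemma~\ref{d2}; it serves as an upper bound only because the structure is pinned down. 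Dividing by $|A|\le M$ finishes.

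Finally, your lower-bound construction is mis-sized. The progression on the stretched edge must have $M-d+1$ points, so that $|A|=M$; this is exactly $C_{d,M}$. With $M$ points on the edge you get $|A|=M+d-1>M$, and then $|2A|=(d+1)|A|-\binom{d+1}{2}\ge K|A|$ violates the hypothesis.
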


While the above theorem is stated under the strict doubling condition $|A+A|<K|A|$,  the same method can be applied to obtain  an alternative version for the relaxed assumption $|A+A|\le K|A|$.

\begin{theorem}\label{main1.2}
	Let $K>1$ be a real number and denote 
	\[
	\widebar f(n,K)=\sup \left\{\frac{|nA|}{|A|}\colon A\subsetneq \Z \text{ is a finite nonempty subset with }|2A|\le K|A|\right\}\,.
	\]
	\begin{enumerate}
		
		    \item If $K\le 2$, then for every positive integer $n$, we have
		    \[
		    \widebar f(n,K)=
		    \begin{cases}
		    	1, &\text{ if } K< \frac{3}{2};\\
		    	(1-\frac{1}{M})n+\frac{1}{M}, \text{ where }M=\lfloor \frac{1}{2-K}\rfloor, &\text{ if }3/2\le K<2;\\
		    	\frac{(n+1)(n+2)}{6}, &\text{ if }K=2\,.
		    \end{cases}	
		    \]

			\item If $K> 2$, then there exists $N$ such that for every positive integer $n\ge N$, we have 
			\[
			\widebar f(n,K)={n+d-1\choose d}-\frac{n-1}{M}{n+d-1\choose d-1}=\frac{M-d}{d!M}n^d+O_K(n^{d-1})\,,
			\]
			where $d=\lfloor 2K-2\rfloor$ and $M=\left\lfloor\frac{d(d+1)}{2(d+1-K)} \right\rfloor$.
		\end{enumerate}
\end{theorem}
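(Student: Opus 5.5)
The plan is to reduce Theorem~\ref{main1.2} to an extremal problem over Freiman types of bounded size, and to follow the proof of Theorem~\ref{main1.1} with every threshold shifted by its boundary case. The guiding example comes from the Freiman-dimension inequality. If $A$ has Freiman dimension $d$, then $|2A|\ge (d+1)|A|-\binom{d+1}{2}$. The condition $|2A|\le K|A|$ with $K<d+1$ therefore forces
\[
|A|\le \frac{d(d+1)}{2(d+1-K)}.
\]
This explains why $M=\lfloor \frac{d(d+1)}{2(d+1-K)}\rfloor$ appears. The non-strict inequality is also why the floors replace the ceilings-minus-one of Theorem~\ref{main1.1}. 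Likewise $d=\lfloor 2K-2\rfloor$ is the largest dimension for which this size bound still allows $|A|>d$.

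For the lower bound I would take
\[
A=\{0,e_1,\dots,e_{d-1}\}\cup\{je_d: 0\le j\le M-d+1\}\subset\Z^d,
\]
of size $M$, and map it into $\Z$ by a Freiman isomorphism of order $n$, using a large base. This set attains equality in the dimension inequality, so $|2A|\le K|A|$. The remaining check is to verify directly that $|nA|$ equals $M\binom{n+d-1}{d}-(n-1)\binom{n+d-1}{d-1}$. The count is the lattice points of the dilated simplex on $0,e_1,\dots,e_{d-1},(M-d+1)e_d$, minus the holes forced by the progression being only $M-d+2$ long.

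For the case $K\le 2$, I would argue as follows.
\begin{itemize}
\item \emph{$K<\tfrac32$.} Freiman's $3k-4$ theorem forces $A$ to be an arithmetic progression or a singleton, so $\widebar f=1$.
\item \emph{$\tfrac32\le K<2$.} The theorem gives $A$ inside a progression of length at most $|2A|-|A|+1$, so $A$ is a progression with few holes. The small extremal set is $\{0,\dots,M-2\}\cup\{\text{far point}\}$, which yields the linear formula.
\item \emph{$K=2$.} This is the new boundary case: the set $\{0,e_1,e_2\}$ has $|2A|=6=2|A|$, and gives $|nA|=\binom{n+2}{2}$, hence the ratio $\frac{(n+1)(n+2)}{6}$. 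One must also show that no set with $|2A|\le 2|A|$ beats this. Dimension $\ge 3$ is excluded by the dimension inequality. Dimension-$2$ sets with $|A|\ge4$ have $|nA|/|A|$ bounded by the triangle count.
\end{itemize}

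For the upper bound when $K>2$, I would split into large and small $A$. When $|A|$ exceeds a constant $C_K$, Freiman's theorem (or the $3k-4$-type structure for Freiman dimension) places $A$ in a generalized progression of dimension at most $\lfloor K-1\rfloor$ and size $O_K(|A|)$. This gives $|nA|\le C'_K n^{\lfloor K-1\rfloor}|A|$. Since $K>2$ we have $\lfloor K-1\rfloor<d$, so for $n\ge N$ these sets are negligible compared with $n^d$.

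When $|A|\le C_K$, there are finitely many Freiman types up to the relevant order. For each type of dimension $d'$ and size $m$, Khovanskii's theorem makes $|nA|$ eventually a polynomial of degree $d'$. The dimension inequality gives $d'\le d$, and for $d'=d$ it gives $m\le M$. So it remains to show that, among $d$-dimensional sets of size $m\le M$, the polynomial $|nA|/m$ is at most the claimed expression for large $n$.

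The main obstacle is this last extremal inequality. The leading coefficient is governed by the volume of the convex hull, and the claim amounts to showing that the configuration "simplex plus one progression" maximizes $|nA|$ once $n\ge N$. I would first try induction on $d$. Project along a direction of the convex hull, compare the lattice-point counts in slices, and use the Eliahou--Mazumdar/Macaulay bounds to control the $n^{d-1}$ term. Any tie in the leading coefficient would be broken by comparing second coefficients.
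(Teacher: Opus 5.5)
\textbf{Main gap: the upper bound in part 2 is not proved.} The step you call ``the main obstacle'' is the whole content of part~2. It is left unproved, and your reduction is false as written. The phrase ``among $d$-dimensional sets of size $m\le M$'' drops the doubling hypothesis, and then $|nB|/m$ is unbounded. For example, if $M\ge d+2$, the set $B=\{0,e_1,\dots,e_d,x\}$ with $x$ far away has $|nB|$ growing like $\mathrm{vol}(\operatorname{conv}B)\,n^d$. So no projection or Macaulay-type induction can succeed unless it uses $|2A|\le K|A|$ essentially, and your sketch does not say how.

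The missing idea is that this hypothesis is rigid at the maximal dimension. Since $d=\lfloor 2K-2\rfloor$ gives $K<(d+3)/2$, combining $|2A|\le K|A|$ with Freiman's bound $|2A|\ge (d+1)m-\binom{d+1}{2}$ and integrality forces $|2A|=(d+1)m-\binom{d+1}{2}$ exactly. It also forces $m\le d+2$, or $m\le 5$ if $d=2$ (Lemmas~\ref{4.1} and~\ref{suitable}). Equality at $n=2$ in Lemma~\ref{d2} then identifies the $d$-dimensional model $B$ of $A$ via B\"or\"oczky--Santos--Serra, and the list of equality configurations is the same for every $n$. Since $A=a+\lambda(B)$ is an affine homomorphic image of $B$, the \emph{lower} bound of Lemma~\ref{d2} becomes an \emph{upper} bound for every $n$:
\[
|nA|\le|nB|=\binom{n+d-1}{d}m-\binom{n+d-1}{d-1}(n-1).
\]
Dividing by $m$ and using $m\le M$ finishes, with no comparison of eventual polynomials or tie-breaking needed.

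Your Khovanskii detour would need repair in any case. For $A\subseteq\Z$ the eventual polynomial is linear, so you must pass to the model and use $|nA|\le|nB|$. Also, only the $2$-types of bounded-size sets are finitely many, not types ``up to the relevant order'' $n$. Your treatment of large $|A|$ (dimension $\le\lfloor K-1\rfloor<d$) is fine and is a legitimate substitute for Lemma~\ref{2.3}.

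\textbf{Errors in the parts you do carry out.}

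\emph{The lower-bound set.} The set $\{0,e_1,\dots,e_{d-1}\}\cup\{je_d:0\le j\le M-d+1\}$ has $M+1$ elements, not $M$. Since $M$ is the largest $m$ with $(d+1)m-\binom{d+1}{2}\le Km$, this set violates $|2A|\le K|A|$. You need $0\le j\le M-d$.

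\emph{The case $K<2$.} Neither the $3k-4$ theorem nor holes play a role. The chain $2|A|-1\le|2A|\le K|A|<2|A|$ forces $|2A|=2|A|-1$, so $A$ is an arithmetic progression with $|A|\le\lfloor 1/(2-K)\rfloor$. Hence $|A|=1$ when $K<3/2$, and otherwise the extremal set is the progression of length $M$. Your set $\{0,\dots,M-2\}\cup\{x\}$ with $x$ far has $|2A|=3M-3>KM$ for $M\ge3$, and its $n$-fold sumset grows quadratically. So it is neither admissible nor a source of the linear formula.

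\emph{The case $K=2$.} The case you treat is vacuous: Freiman dimension $2$ with $|2A|\le 2|A|$ forces $3|A|-3\le 2|A|$, i.e.\ $|A|\le 3$. What needs an argument is the Freiman-dimension-one sets of unbounded size. For $|A|\ge4$ one has $|2A|\le 3|A|-4$, so Freiman's $3k-4$ theorem makes $A$ a progression, or a progression missing one element next to an end. This gives $|nA|\le n|A|\le\frac{(n+1)(n+2)}{6}|A|$. The remaining sets, with $|A|\le3$, follow from the trivial bound $|nA|\le\binom{n+|A|-1}{n}$.
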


The following theorem investigates the structure of subsets $A$ satisfying $|A+A|<K|A|$ (resp. $|A+A|\le K|A|$) for which the ratio $\frac{|nA|}{|A|}$ is close to $f(n,K)$ (resp. $\overline{f}(n,K)$).
When $K\le 2$, the condition $|A+A|<K|A|$ (resp. $|A+A|\le K|A|$) forces $A$ to be an arithmetic progression, or  a set obtained by removing one element from an arithmetic progression, or $|A|=3$ (see Lemma \ref{special}).
We therefore restrict our attention to the case $K>2$. See Definition \ref{C} for the definition of   the sets $\mathcal C_{d,M}$, and see the beginning of Section \ref{se2} for the definition of Freiman dimension $\dim_F(A)$ of a finite and nonempty subset $A\subsetneq \mathbb Z$.

\begin{theorem}\label{main1.3}
	Let $K> 2$ be a real number. Then there exist $\eta_K>0$ and $N_K$ such that for every $n\ge N_K$, the following hold.
	\begin{enumerate}
		\item Let $d=\lceil 2K-3\rceil$ and $M=\left\lceil\frac{d(d+1)}{2(d+1-K)} \right\rceil-1$.
		If $A\subsetneq \Z$ is a finite and nonempty subset such that 
		\[
		\text{$|A+A|<K|A|$\quad and\quad $\frac{|nA|}{|A|}\ge f(n,K)-\eta_Kn^d$,}
		\] then
		\[
		|A|=M, \quad \dim_F(A)=d,\quad |A+A|=(d+1)|A|-{d+1\choose 2}\,,
		\]
		and there exist $B_n\in \mathcal C_{d,M}$, $a_n\in \Z$, and a homomorphism $\lambda\colon \Z^d\rightarrow \Z$ such that 
		\[
		\text{$A=a_n+\lambda(B_n)$\quad and\quad $\frac{|nB_n|-|\lambda(nB_n)|}{|B_n|}\le \eta_Kn^d$.}
		\]
	
		\item Let $d=\lfloor 2K-2\rfloor$ and $M=\left\lfloor\frac{d(d+1)}{2(d+1-K)} \right\rfloor$. If $A\subsetneq \Z$ is a finite and nonempty subset such that
		\[
		\text{$|A+A|\le K|A|$\quad and\quad $\frac{|nA|}{|A|}\ge \widebar f(n,K)-\eta_Kn^d$,}
		\]  then
		\[
		|A|=M, \quad \dim_F(A)=d,\quad |A+A|=(d+1)|A|-{d+1\choose 2}\,,
		\]
		and there exist $B_n\in \mathcal C_{d,M}$, $a_n\in \Z$, and a homomorphism $\lambda\colon \Z^d\rightarrow \Z$ such that 
		\[
		\text{$A=a_n+\lambda(B_n)$\quad  and\quad  $\frac{|nB_n|-|\lambda(nB_n)|}{|B_n|}\le \eta_Kn^d$.}
		\]
	
	\end{enumerate}
\end{theorem}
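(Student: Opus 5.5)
The plan is to run the proof of Theorem \ref{main1.1}(2) (respectively Theorem \ref{main1.2}(2)) in a stability form: every inequality used in the upper bound should be shown to lose a positive proportion of $n^d$ unless it is essentially tight. The starting point is a Freiman-type reduction. If $|2A|<K|A|$ with $K>2$, the Freiman $3k-4$ type bounds and Freiman's lemma $|2A|\ge (\dim_F(A)+1)|A|-\binom{\dim_F(A)+1}{2}$ give $\dim_F(A)\le d$. We then take a Freiman isomorphic model $B\subset\Z^{d'}$ with $d'=\dim_F(A)$ and a homomorphism $\lambda$ with $A=a+\lambda(B)$. This gives $|nA|\le |nB|$.

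If $d'<d$, the trivial bound $|nB|\le\binom{n+|A|-1}{|A|-1}$ is unsuitable, since $|A|$ need not be bounded. Instead we use the polynomial-growth bound $|nB|\le C_K n^{d'}|B|$, which should follow from the dimension bound and a covering argument. This ratio is $O_K(n^{d-1})$, which is far below $f(n,K)-\eta_K n^d$ for large $n$. So $\dim_F(A)=d$.

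Next, with $d'=d$, the doubling condition forces $|A|$ to be bounded. Freiman's lemma gives $(d+1)|A|-\binom{d+1}{2}\le |2A|<K|A|$, so $|A|<\binom{d+1}{2}/(d+1-K)$, which is exactly $|A|\le M$ from the definition of $M$. For a $d$-dimensional set $B$ of size $m\le M$ we need the sharp bound
\[
|nB|\le \binom{n+d-1}{d}m-(n-1)\binom{n+d-1}{d-1}+O(n^{d-1}),
\]
or rather the exact extremal statement presumably established earlier in the paper. Its leading term per element is $\frac{m-d}{d!\,m}n^d$, which is strictly increasing in $m$. Hence $m\le M-1$ loses at least $c_K n^d$ for some $c_K>0$. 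Choosing $\eta_K$ smaller than this gap, and smaller than the gaps appearing in the previous step, forces $|A|=M$.

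It remains to extract the structure. The equality $|2A|=(d+1)|A|-\binom{d+1}{2}$ and membership of $B$ in $\mathcal C_{d,M}$ should both follow from the earlier characterization of the sets attaining the leading coefficient. Since $|A|=M$ is bounded, there are only finitely many Freiman types of sets of size $M$ and dimension $d$. For any type not in $\mathcal C_{d,M}$, the Khovanskii polynomial $|nB|$ has leading coefficient strictly smaller than $\frac{M-d}{d!}$. Taking the minimum gap over these finitely many types gives a uniform $\eta_K$, and a uniform $N_K$ by taking the maximum of the finitely many thresholds. The equality for $|2A|$ should come from the definition of $\mathcal C_{d,M}$, namely sets consisting of a simplex-like configuration, which satisfy equality in Freiman's lemma.

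The representation $A=a_n+\lambda(B_n)$ has $B_n$ depending on $n$. This reflects that several models may exist, and we pick one maximizing $|\lambda(nB_n)|$. Then
\[
|nB_n|-|\lambda(nB_n)|=|nB_n|-|nA|\le |A|\bigl(f(n,K)-|nA|/|A|\bigr)\le \eta_K n^d |A|,
\]
using $|nB_n|\le f(n,K)|A|$ for $B_n\in\mathcal C_{d,M}$. Part (2), for $\widebar f(n,K)$, is identical with $d=\lfloor 2K-2\rfloor$ and the floor version of $M$.

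The main obstacle is the finiteness and uniform-gap argument for the structure. One must know that within dimension $d$ and size $M$, all non-$\mathcal C_{d,M}$ configurations, including those whose images under $\lambda$ collapse, lose order $n^d$. This also requires controlling the collision term $|nB|-|\lambda(nB)|$, which could a priori be large. I would first try to handle it by choosing the Freiman model of maximal dimension, so that $\lambda$ is injective on $2B$, and then bounding the collisions through the fixed-size-$M$ finiteness.
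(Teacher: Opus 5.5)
\textbf{There is a genuine gap in how you get the upper bound on $|nA|$ and the structure.}

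The bound
\[
|nB|\le \binom{n+d-1}{d}m-(n-1)\binom{n+d-1}{d-1}
\]
is false for a general $d$-dimensional $B$ of size $m$. Lemma~\ref{d2} gives exactly this expression as a \emph{lower} bound for every $d$-dimensional $B$. So your uniform-gap claim is backwards: every $d$-dimensional Freiman type of size $M$ has leading coefficient at least $\frac{M-d}{d!}$, never strictly smaller. For instance, with $d=2$, $M=5$, the set $\{0,e_1,2e_1,e_2,2e_2\}$ has $|nB|\sim 2n^2$, and $2>\frac32$. Such types cannot be excluded by the growth hypothesis. Without the structure you have no upper bound on $|nA|$ when $\dim_F(A)=d$, so your deduction $|A|=M$ has nothing to stand on. Your remark that the equality for $|2A|$ ``should come from the definition of $\mathcal C_{d,M}$'' is circular, because identifying the type as an element of $\mathcal C_{d,M}$ needs that equality first.

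\textbf{The missing idea} is that these types are ruled out by the doubling hypothesis alone (the example above has $|2B|=13>\frac52\cdot 5$), via the integrality squeeze of Lemma~\ref{4.1}. Since $d=\lceil 2K-3\rceil$ gives $K\le\frac{d+3}{2}$, and $m\ge d+2$ gives $\frac{d+3}{2}m\le (d+1)m-\binom{d+1}{2}+1$, the chain
\[
(d+1)m-\binom{d+1}{2}\le |2A|<Km\le (d+1)m-\binom{d+1}{2}+1
\]
forces $|2A|=(d+1)m-\binom{d+1}{2}$. For $m=d+1$ this is automatic, and in part (2) you use $K<\frac{d+3}{2}$ instead.

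The rest of the chain then runs as follows:
\begin{itemize}
\item $m<\frac{d(d+1)}{2(d+1-K)}\le\frac{d(d+1)}{d-1}$ makes $(d,m)$ suitable (Lemma~\ref{suitable}).
\item The equality case of Lemma~\ref{d2} at $n=2$ identifies the model as some $B\in\mathcal C_{d,m}$.
\item For these explicit $B$ the Freiman isomorphism is affine (Claim B of Proposition~\ref{3.4}); you asserted this for an arbitrary model without proof.
\item Only now does $|nA|=|\lambda(nB)|\le|nB|=\binom{n+d-1}{d}m-(n-1)\binom{n+d-1}{d-1}$ hold.
\end{itemize}

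From that point your remaining steps are correct and match the paper:
\begin{itemize}
\item $\dim_F(A)\le d-1$ is excluded by $\eta_K<\frac{M-d}{d!M}$.
\item $|A|\le M-1$ is excluded by monotonicity in $m$ with $\eta_K<\frac{1}{(d-1)!M(M-1)}$.
\item The collision bound is your final computation, using $|nB_n|=f(n,K)|A|$.
\end{itemize}
No finiteness argument over Freiman types and no separate control of collisions is needed.
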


In Section 2, we collect some necessary lemmas and prove some properties of subsets $A\subsetneq \Z$ with $|A+A|<K|A|$ having maximal Freiman dimension. In Section 3, we show the structure of subsets $A\subsetneq \Z$ with $(\dim_F(A), |A|)$ suitable and $|2A|$ minimal, which can be applied to both cases $|2A|<K|A|$ and $|2A|\le K|A|$. Our main Theorems will be proved in Section 4. The final section investigates Katz's problem, which is closely related to Problem \ref{p1}.

\section{Preliminaries}\label{se2}

Let $\Z$ be the set of all integers and let $\R$ be the set of all real numbers. For $a,b\in \Z$ with $a\le b$, we denote $[a,b]$ the discrete intervals between $a$ and $b$.

Given a subset $P\subseteq\R^d$, 
the \emph{convex hull} $\operatorname{conv}(P)$ is the smallest convex subset of $\R^d$ containing $P$, i.e.,
\[
\operatorname{conv}(P)=\left\{\,\sum_{i=1}^m \lambda_i s_i \,\colon\,
m\ge 1,\ s_i\in P,\ \lambda_i\ge 0,\ \sum_{i=1}^m\lambda_i=1\,\right\},
\]
and the \emph{affine hull} $\operatorname{aff}(P)$ is the smallest affine subspace containing $P$, i.e.,
\[
\operatorname{aff}(P)=\left\{\,\sum_{i=1}^m \lambda_i p_i \,\colon\,
m\ge 1,\ p_i\in P,\ \lambda_i\in \R,\ \sum_{i=1}^m\lambda_i=1\,\right\}\,.
\]
The dimension of a polytope $P$ is defined as $\dim P=\dim \operatorname{aff}(P)$. For a subset $A\subsetneq \Z^d\subseteq \R^d$, we say $A$ is \emph{$d$-dimensional} if $A$ is not contained in any proper affine subspace of $\R^d$.

Let $A,B$ be two subsets of torsion-free abelian groups. We say a map $\psi\colon A\rightarrow B$ is a \emph{Freiman $2$-homomorphism} if for any $a,b,c,d\in A$, we have 
\[
a+b=c+d  \Rightarrow \psi(a)+\psi(b)=\psi(c)+\psi(d)\,.
\]
If $\psi$ is bijective and its inverse is also a \emph{Freiman $2$-homomorphism}, then we call $\psi$ a Freiman $2$-isomorphism.
The \emph{Freiman dimension} $\dim_F(A)$ is defined as the largest integer $d$ such that $A$ is Freiman $2$-isomorphic to a $d$-dimensional subset of $\Z^d$.

We first collect some simple results.

\begin{lemma}\label{simple}
	Let $A,B\subsetneq \Z$ be two finite subsets with $|A|, |B|\ge 2$. Then $|A+B|\ge |A|+|B|-1$ and equality holds if and only if $A, B$ are arithmetic progressions with common difference.
\end{lemma}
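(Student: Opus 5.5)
The plan is the classical extremal-element argument. Write $A=\{a_1<a_2<\dots<a_k\}$ and $B=\{b_1<b_2<\dots<b_l\}$ with $k=|A|$ and $l=|B|$. For the inequality I will exhibit $k+l-1$ distinct elements of $A+B$ by walking up a staircase:
\[
a_1+b_1<a_2+b_1<\dots<a_k+b_1<a_k+b_2<\dots<a_k+b_l .
\]
This chain is strictly increasing, so $|A+B|\ge k+l-1$.

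For the equality case, the reverse implication is immediate. If $A=\{a+id: 0\le i<k\}$ and $B=\{b+jd: 0\le j<l\}$, then $A+B=\{a+b+td: 0\le t\le k+l-2\}$, which has exactly $k+l-1$ elements.

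For the forward implication, assume $|A+B|=k+l-1$. Then the staircase above is all of $A+B$. I will compare it with a second staircase, which goes through $b_2$ before moving to $a_2$:
\[
a_1+b_1<a_1+b_2<a_2+b_2<\dots .
\]
More generally, for each $i\in[1,k-1]$ and $j\in[1,l-1]$ I will use the staircase that runs along row $b_j$ up to $a_i$, steps to $b_{j+1}$, and continues. All of these chains have length $k+l-1$. Two strictly increasing enumerations of the same $(k+l-1)$-element set must coincide term by term, so every one of these chains lists $A+B$ in the same order.

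Comparing two chains that differ in one corner gives
\[
a_i+b_{j+1}=a_{i+1}+b_j \quad\text{for all } i\in[1,k-1],\ j\in[1,l-1].
\]
Hence $a_{i+1}-a_i=b_{j+1}-b_j$ for all such $i$ and $j$. Both gap sequences are therefore constant and equal to a common difference $d$, which gives the result. The hypothesis $k,l\ge 2$ ensures both index ranges are nonempty.

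The only step needing care is the corner comparison. I expect to handle it most cleanly with just two chains per corner. Fix $i$ and $j$, and let $C_1$ be the chain
\[
(a_1,b_1)\to\cdots\to(a_i,b_1)\to\cdots\to(a_i,b_j)\to(a_{i+1},b_j)\to(a_{i+1},b_{j+1})\to\cdots .
\]
Let $C_2$ be the same chain except that it goes $(a_i,b_j)\to(a_i,b_{j+1})\to(a_{i+1},b_{j+1})$. These two chains agree except at one position, whose entries are $a_{i+1}+b_j$ and $a_i+b_{j+1}$. Equality of the term-by-term enumerations then forces these two entries to be equal. Everything else is routine.
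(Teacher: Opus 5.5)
Your proof is correct, and it takes a different route from the paper only in the sense that the paper gives no argument of its own: it just cites Tao--Vu (Proposition 5.8 of \emph{Additive Combinatorics}). Your write-up is therefore a self-contained substitute for that citation.

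The inequality is the standard staircase count. Your equality argument works as follows. Once $|A+B|=k+l-1$, every monotone lattice path from $(1,1)$ to $(k,l)$ gives a strictly increasing list of $k+l-1$ sums, so it is \emph{the} increasing enumeration of $A+B$. Two paths that differ only around one unit square put $a_{i+1}+b_j$ and $a_i+b_{j+1}$ in the same position $i+j$. This gives $a_{i+1}-a_i=b_{j+1}-b_j$ for all $i\in[1,k-1]$ and $j\in[1,l-1]$ at once. You need no induction on $|A|+|B|$ and no separate treatment of the case $k\ne l$.

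The citation buys the authors brevity. Your version checks directly in $\mathbb Z$ the equality characterization the paper actually relies on, through Lemma \ref{d=1} with $A=B$, in the case $K\le 2$ of the main theorems.

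One presentational fix: your first, informal description of the corner chains (``runs along row $b_j$ up to $a_i$'') does not quite parse. The explicit chains $C_1$ and $C_2$ that follow are exactly right, so state the argument with those alone.
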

\begin{proof}
See \cite[Proposition 5.8]{Ta-Vu06}.
\end{proof}

\begin{lemma}\label{d=1}
	Let $A$ be a finite and nonempty subset of $\Z$  and let $n\ge 2$ be a positive integer. Then 
	\[
	|nA|\ge n|A|-(n-1)\,.
	\]
	and equality holds if and only if $A$ is an arithmetic progression.
\end{lemma}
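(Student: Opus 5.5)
The plan is induction on $n$, using Lemma \ref{simple} as the only tool. The case $|A|=1$ is trivial: then $|nA|=1=n\cdot 1-(n-1)$, and a singleton counts as a (degenerate) arithmetic progression. So assume $|A|\ge 2$. For $n=2$ the statement is exactly Lemma \ref{simple} with $B=A$: we get $|2A|\ge 2|A|-1$, with equality if and only if $A$ is an arithmetic progression.

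For the inductive step, suppose $n\ge 3$ and the claim holds for $n-1$. Write $nA=(n-1)A+A$. Since $|(n-1)A|\ge |A|\ge 2$, Lemma \ref{simple} applies and gives
\[
|nA|\ge |(n-1)A|+|A|-1\ge (n-1)|A|-(n-2)+|A|-1=n|A|-(n-1)\,.
\]

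For the equality case, first suppose $A$ is an arithmetic progression $\{a,a+q,\dots,a+(|A|-1)q\}$. Then $nA=\{na+jq\colon 0\le j\le n(|A|-1)\}$, which has exactly $n|A|-n+1$ elements, so equality holds.

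Conversely, suppose $|nA|=n|A|-(n-1)$. Then both inequalities in the displayed chain must be equalities. In particular the first one forces, by the equality case of Lemma \ref{simple}, that $(n-1)A$ and $A$ are arithmetic progressions with a common difference. Hence $A$ is an arithmetic progression. (Alternatively, the second equality gives $|(n-1)A|=(n-1)|A|-(n-2)$, and the induction hypothesis yields the same conclusion.)

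No step is a real obstacle. The only points needing care are checking the size hypothesis $|A|,|B|\ge 2$ of Lemma \ref{simple} at each application, and handling the degenerate case $|A|=1$ separately.
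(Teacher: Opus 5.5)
Your proposal is correct and follows essentially the same route as the paper: induction on $n$ via $nA=(n-1)A+A$ and Lemma~\ref{simple}, with the equality case read off from equality throughout the chain. The paper derives the converse from the induction hypothesis, which is your parenthetical alternative. Your main route instead applies the equality case of Lemma~\ref{simple} directly; this is equally valid, since you checked $|(n-1)A|\ge |A|\ge 2$.
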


\begin{proof}The assertion is trivial when $|A|=1$. Assume that $|A|\ge 2$.
	If $n=2$, then it follows from Lemma \ref{simple} by applying $A=B$. Suppose $n\ge 3$. We proceed by induction. Assume that the assertion holds for $n-1$. By Lemma \ref{simple}, we have
	\[
	|nA|=|(n-1)A+A|\ge |(n-1)A|+|A|-1\ge (n-1)|A|-(n-2)+|A|-1=n|A|-(n-1)\,.
	\]
	
	If $|nA|= n|A|-(n-1)$, then all equalities hold in the above inequality and hence $|(n-1)A|=(n-1)|A|-(n-1)$. By the assumption hypothesis, we obtain that $A$ is an arithmetic progression. 
	
	If $A$ is an arithmetic progression, then it is easy to see that $|nA|= n|A|-(n-1)$.
\end{proof}

The following lemma investigates the growth of iterated sumsets of $d$-dimensional subsets, which plays a key role in our proofs. To state the lemma, we need the following terminology (see \cite{Bo-Sa-Se14} for more detail).

Points $v_0,v_1,\dots,v_k\in\R^d$ are \emph{affinely independent} if the vectors
$v_1-v_0,\,v_2-v_0,\dots,v_k-v_0$ are linearly independent, or
equivalently, the only scalars $\lambda_0,\dots,\lambda_k$ satisfying
\[
\sum_{i=0}^k\lambda_i v_i=0 \quad \text{and}\quad \sum_{i=0}^k\lambda_i=0
\]
are $\lambda_0=\lambda_1=\ldots=\lambda_k=0$.

A \emph{$d$-simplex} is the convex hull
\[
\Delta=\operatorname{conv}\{v_0,v_1,\dots,v_d\}
\]
of $d+1$ affinely independent points $v_0,\dots,v_d\in\mathbb R^d$.
A $k$-face of a simplex is the convex hull of any $(k+1)$‑element subset of its vertices.
In particular, a \emph{$2$-face of a $d$-simplex} is the convex hull of any three vertices of the simplex; it is a triangular $2$-dimensional face.

Given a polytope $P\subsetneq\mathbb R^d$ and a point $w\notin \operatorname{aff}(P)$, the
\emph{pyramid} over base $P$ with apex $w$ is
\[
\operatorname{pyr}(P,w)=\operatorname{conv}(P\cup\{w\}).
\]
We have $\dim \operatorname{pyr}(P,w)=\dim P+1$.
An \emph{iterated pyramid} is obtained by repeated pyramid constructions: start with a polytope $Q$, set $P_0=Q$, and recursively
$P_{i+1}=\operatorname{pyr}(P_i,w_i)$, where each apex $w_i$ lies outside $\operatorname{aff}(P_i)$.

A \emph{simplex-prism} is any polytope affinely equivalent to 
\[
\{(x,t)\in \R^{d+1}\colon x\in \Delta\subsetneq \R^d, t\in \R, \ell(x)\le t\le u(x)\}\,,
\] 
where $\Delta$ is a $d$-simplex and $\ell,u$ are two affine functions with $\ell(x)\le u(x)$ for every $x\in \Delta$.
For each vertex $v$ of $\Delta$, the pair $\big((x,\ell(v)), (x,u(v))\big)$ spans an edge connecting the original base simplex $\Delta$ to its translated copy; these are called the \emph{vertical edges} of the simplex‑prism.

\begin{lemma}\label{d2}
	Let $B\subsetneq \Z^d$ be finite and $d$-dimensional. 
	\begin{enumerate}
		\item For every positive integer $n\ge 2$, we have
		\[
		|nB|\ge {n+d-1\choose d}|B|-{n+d-1\choose d-1}(n-1)\,.
		\]
		
		\item Suppose $d\ge 2$ and $n\ge 2$. Then the equality holds for the above inequality 
		if and only if one of the following holds.
		\begin{enumerate}
			\item The set $B$ consists of the vertices of a simplex plus an arithmetic progression
			contained in an edge of the simplex, starting and ending at the endpoints of the
			edge.

			\item The set $B$ consists of the vertices of a simplex plus the midpoints of the sides of
			a certain $2$-face of the simplex.
			
			\item  For $d \ge q \ge 2$, $\operatorname{conv}(B)$ is an iterated pyramid over a $q$-dimensional simplex-prism.
			There exists a non-zero $w\in \mathbb R^d$ such that $B$ consists of the vertices of $\operatorname{conv}(B)$ and,
			for each vertical edge of the simplex-prism, the arithmetic progression of difference $w$
			starting and ending at its endpoints.
		\end{enumerate}
	\end{enumerate}
\end{lemma}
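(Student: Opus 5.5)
The plan is to prove part (1) by induction on $|B|$, with $d$ fixed, and to use an inner induction on $d$ when the pyramid case below occurs. Part (2) will then come from tracking when every inequality in the argument is tight; this is essentially the theorem of Böröczky--Santos--Serra \cite{Bo-Sa-Se14}, so I will lean on their characterization for the final identification of the extremal sets.

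\emph{Base case.} Every $d$-dimensional $B$ has $|B|\ge d+1$. If $|B|=d+1$, then $B$ is the vertex set of a unimodular-in-its-lattice simplex. Hence $nB$ is in bijection with the solutions of $j_0+\dots+j_d=n$, so $|nB|={n+d\choose d}$. The identity
\[
(d+1){n+d-1\choose d}-(n-1){n+d-1\choose d-1}={n+d\choose d}
\]
is routine to check, so the bound holds with equality here. The right-hand side increases by exactly ${n+d-1\choose d}$ each time $|B|$ increases by one. It therefore suffices to find one element $v\in B$ with $B'=B\setminus\{v\}$ such that
\[
|nB\setminus nB'|\ge {n+d-1\choose d}
\]
whenever $B'$ is still $d$-dimensional.

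\emph{Inductive step.} Take $v$ to be a vertex of $\operatorname{conv}(B)$ and put $B'=B\setminus\{v\}$. There are two cases.
\begin{enumerate}
\item \emph{$B'$ lies in a hyperplane $H$.} Then $\operatorname{conv}(B)$ is a pyramid with apex $v$. Choose a linear functional that is constant on $H$ and different at $v$. This splits $nB$ disjointly as
\[
nB=\bigsqcup_{j=0}^{n}\bigl(jv+(n-j)B'\bigr).
\]
Applying the $(d-1)$-dimensional bound to each $(n-j)B'$, together with Lemma~\ref{d=1} when $d-1=1$, and summing via ${m+d-2\choose d-1}$-type hockey-stick identities should give the claimed bound.
\item \emph{$B'$ is $d$-dimensional.} Choose a linear functional $\phi$ maximized on $\operatorname{conv}(B)$ uniquely at $v$. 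Choose $u_1,\dots,u_d\in B'$ so that $v,u_1,\dots,u_d$ are affinely independent and the $u_i$ are "closest to $v$" with respect to $\phi$, taking for instance neighbours of $v$ along edges of $\operatorname{conv}(B)$. I then claim the points
\[
jv+a_1u_1+\dots+a_du_d,\qquad j\ge 1,\quad j+\textstyle\sum a_i=n,
\]
are pairwise distinct and not in $nB'$. There are exactly ${n+d-1\choose d}$ of them. Distinctness is by affine independence. For non-membership, I would compare $\phi$-values and use a lexicographic ordering of functionals adapted to the cone of $\operatorname{conv}(B)$ at $v$: any element of $nB'$ has a strictly smaller value in the appropriate lexicographic order.
\end{enumerate}

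\emph{Main obstacle.} The key difficulty is making the non-membership claim in case (2) rigorous. The sum $\sum a_iu_i$ with the $u_i$ edge-neighbours is genuinely outside $n\operatorname{conv}(B')$ only in the region near $v$. So I expect to need a more careful choice: triangulate the vertex figure of $\operatorname{conv}(B)$ at $v$, or choose $v$ so that $\operatorname{conv}(B')$ misses a full simplex corner. If this fails, I would fall back on the Böröczky--Santos--Serra argument via the ``new lattice points'' in $n\operatorname{conv}(B)\setminus n\operatorname{conv}(B')$.

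For (2), equality forces equality at every step. In case (2) of the inductive step, this means $nB\setminus nB'$ consists exactly of the listed points, so every further element of $B$ is forced onto an edge or into a $2$-face through $v$. In the pyramid case, equality forces the base to be extremal in dimension $d-1$. Iterating these constraints yields exactly the configurations (a), (b), (c): an arithmetic progression on an edge, midpoints on a $2$-face, or an iterated pyramid over a simplex-prism with progressions of a common difference $w$ along the vertical edges. For the converse direction, each of the three families is checked directly to attain equality.
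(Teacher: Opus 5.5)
The step that fails is your choice of witnesses in case (2). If $u_1,\dots,u_d$ are the neighbours of $v$ along edges of $\operatorname{conv}(B)$, the points $jv+\sum a_iu_i$ need not avoid $nB'$. The failure is actual membership, so no lexicographic refinement can repair it.

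For example, let $B=\{(0,0),(10,0),(0,10),(4,4)\}$ and $v=(0,0)$. Then $B'=\{(10,0),(0,10),(4,4)\}$ is $2$-dimensional and the edge-neighbours are $u_1=(10,0)$, $u_2=(0,10)$. For $n=5$ you get $v+2u_1+2u_2=(20,20)=5\cdot(4,4)\in 5B'$. Being ``closest to $v$'' in $\phi$-value does not fix this either.

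The missing idea is to take the $u_i$ from $\operatorname{conv}(B')$ rather than from $\operatorname{conv}(B)$:
\begin{itemize}
\item Since $v\notin\operatorname{conv}(B')$ and $B'$ is $d$-dimensional, some facet $F$ of $\operatorname{conv}(B')$ is visible from $v$. That is, $\operatorname{aff}(F)$ strictly separates $v$ from $\operatorname{conv}(B')$, and $F$ contains $d$ affinely independent points $u_1,\dots,u_d\in B'$.
\item For $j\ge 1$, the point $\frac1n\bigl(jv+\sum a_iu_i\bigr)$ puts positive weight on $v$ and the rest on $\operatorname{aff}(F)$. So it lies strictly on $v$'s side, hence outside $\operatorname{conv}(B')\supseteq\frac1n\,nB'$.
\item These $\binom{n+d-1}{d}$ points are distinct because $v,u_1,\dots,u_d$ are affinely independent.
\end{itemize}
This is the $n$-fold version of the usual proof of Freiman's lemma, and with it your induction proves (1). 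Your pyramid case closes exactly. The $(d-1)$-dimensional formula is exact at $m=0,1$, and
$\sum_{m=0}^{n}\binom{m+d-2}{d-2}(m-1)=(n-1)\binom{n+d-1}{d-1}-\binom{n+d-1}{d}$.
The paper gives no argument for this lemma at all; it simply cites B\"or\"oczky--Santos--Serra, Corollaries 2 and 6. A repaired (1) would therefore be a genuinely self-contained alternative for the inequality.

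Part (2) is not established by your sketch. Equality for $B$ only tells you two things: $B'$ is extremal (in the pyramid case, the base is extremal for every $m\le n$), and $nB\setminus nB'$ has exactly $\binom{n+d-1}{d}$ elements. Your assertion that every further point of $B$ is then ``forced onto an edge or into a $2$-face through $v$'' is not derived from these conditions. Turning them into the list (a), (b), (c) requires a real classification of one-point extensions of extremal sets, for instance handling vertices $v$ that see several facets of $\operatorname{conv}(B')$. That classification is the substance of the cited Corollary 6. For (2) you are in the same position as the paper, relying on B\"or\"oczky--Santos--Serra; that is legitimate, but state it as a citation rather than as the outcome of ``iterating these constraints''.
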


\begin{proof}
	See \cite[Corollaries 2 and 6]{Bo-Sa-Se14}.
\end{proof}

Next, we give the definition of general progressions, which will be used in the proof of the following lemma.
Let $r$ and  $L_1,\dots,L_r$ be positive integers. Then a \emph{progression} of dimension $r$ is a set of the form \[
P := \left\{ x_0 + \sum_{i=1}^r \mu_i x_i  \colon \mu_i \in [0, L_i]\right\}\,.\]
The \emph{size} (or \emph{volume}) of $P$ is defined by
\[ \size(P) := (L_1+1) \cdot\ldots\cdot (L_r+1).\]
If  $\size(P) = |P|$, then we say  $P$ is \emph{proper}.

\begin{lemma}\label{2.3}
		Let $A$ be a finite and nonempty subset of $\Z$.
		\begin{enumerate}
			\item If $|2A|<K|A|$, then $\dim_F(A)<2K-2$.
			
			\item If $|2A|\le K|A|$, then $\dim_F(A)\le 2K-2$ and 
			\[
			|nA|\le C_Kn^{\dim_F(A)}|A|\,,
			\]
			where $C_K$ depends only on $K$.
		\end{enumerate}
\end{lemma}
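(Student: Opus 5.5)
Set $d=\dim_F(A)$ and fix a Freiman $2$-isomorphism $\psi\colon A\to B$ onto a $d$-dimensional set $B\subsetneq \Z^d$. A Freiman $2$-isomorphism preserves the number of distinct pairwise sums, so $|2A|=|2B|$ and $|A|=|B|$. The plan is to bound $|2B|$ from below in terms of $d$ and $|B|$, compare with the hypothesis on $|2A|$, and then obtain the polynomial growth bound separately from Freiman-type structure.

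\textbf{Part (1) and the first claim of (2).} Apply Lemma \ref{d2}(1) with $n=2$ to get
\[
|2B|\ge (d+1)|B|-\binom{d+1}{2},
\]
which is Freiman's $(d+1)|B|-\binom{d+1}{2}$ lemma. A $d$-dimensional set has at least $d+1$ points, so $|B|\ge d+1$. Dividing by $|B|$ gives
\[
\frac{|2A|}{|A|}\ge d+1-\frac{d(d+1)}{2|B|}\ge d+1-\frac d2=\frac{d+2}{2}.
\]
If $|2A|<K|A|$, this yields $\frac{d+2}{2}<K$, that is, $d<2K-2$. If $|2A|\le K|A|$, it yields $d\le 2K-2$. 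The case $d=0$, where $|A|=1$, is trivial.

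\textbf{Polynomial growth in (2).} Assume $|2A|\le K|A|$. Freiman's theorem, in the form cited in the introduction via \cite{Ch02,Ta-Vu06}, gives a proper progression $P$ of dimension $r=O_K(1)$ with $A\subseteq P$ and $\size(P)\le C'_K|A|$. Then $nA\subseteq nP$, and since $\size(nP)\le n^r\size(P)$,
\[
|nA|\le n^r C'_K|A|.
\]
This gives exponent $r$, not $d$, so the main step is to reduce to $r\le d$.

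\textbf{Main obstacle: reducing $r$ to $\dim_F(A)$.} The standard approach is the following.

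1. Since $P$ is proper, $A$ sits inside $\Z^r$ via the coordinates $\mu_i$. This embedding is a Freiman isomorphism of $A$ onto a subset of a box in $\Z^r$. The reason is that sums in $P+P$ are represented uniquely only if $2P$ is proper, so one should first pass to a $2$-proper progression. Ruzsa's or Chang's version of Freiman's theorem provides this at the cost of constants depending only on $K$.

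2. The image $A'\subseteq \Z^r$ spans an affine sublattice $\Lambda$ of dimension $d'$. Freiman-isomorphic images have dimension at most $\dim_F(A)$, so $d'\le d$.

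3. Intersect the box with $\mathrm{aff}(A')$. This gives a convex body in a $d'$-dimensional lattice that contains $A'$. Its lattice-point count is controlled by $O_K(|A|)$ using the covering/volume argument in the proof of Freiman's theorem: $A'$ has small doubling and spans the affine space, so by a Ruzsa covering argument $A'$ is covered by $O_K(1)$ translates of a $d'$-dimensional progression $Q$ with $\size(Q)=O_K(|A|)$.

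4. Then $nA'$ is covered by $O_K(1)^n$ translates. That count is too many, so instead one should use the convex body directly. Take $D=\mathrm{conv}(A')$, which satisfies $nA'\subseteq nD\cap\Lambda$. Its lattice-point count is $O(n^{d'}\,\mathrm{vol}(D)+\text{lower terms})$, and this is $O_K(n^{d'}|A|)$ provided $|D\cap\Lambda|=O_K(|A|)$. That last bound follows from the small doubling of $A'$ together with $A'$ lying in a box of size $O_K(|A|)$ inside $\Lambda$, via Blichfeldt/Ehrhart-type estimates with constants depending on $d'\le 2K-2$.

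5. Since $nA$ is a Freiman image of $nA'$, which does not increase size, $|nA|\le C_K n^{d'}|A|\le C_K n^{d}|A|$.

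The delicate point is step 4: it needs uniform control of lattice points in $nD$ in terms of $|A|$. I would handle it by choosing a basis of $\Lambda$ in which $D$ is comparable to a box (John's theorem), with all constants depending only on $K$ through $d\le 2K-2$.
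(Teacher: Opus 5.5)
Your argument for part (1) and for $\dim_F(A)\le 2K-2$ is exactly the paper's: Lemma \ref{d2}(1) with $n=2$, together with $|A|\ge d+1$. The growth bound is where there is a genuine gap. The paper does not reduce dimensions itself. It quotes a form of Freiman's theorem (\cite[Proposition 1.3]{Gr-Ta06}, \cite{Ch02}) that already puts $A$ in a proper progression $P$ of dimension $r\le\dim_F(A)$ with $|P|\le C_K|A|$, after which $|nA|\le|nP|\le n^r|P|$ is immediate.

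You instead start from dimension $r=O_K(1)$ and try to lower the exponent to $d'=\dim\operatorname{aff}(A')$, but step 4 does not do this. The implication you rely on, that $|nD\cap\Lambda|=O_K(n^{d'}|A|)$ provided $|D\cap\Lambda|=O_K(|A|)$, is false for $d'\ge 3$. The number of lattice points of a lattice polytope controls neither its volume nor its lower-order Ehrhart coefficients. For example, the Reeve tetrahedron $\operatorname{conv}\{0,e_1,e_2,e_1+e_2+he_3\}$ contains only $4$ lattice points, yet its $n$-th dilate contains $\frac{h}{6}(n^3-n)+n^2+2n+1$. Choosing a good basis via John's theorem cannot repair this. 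The obstruction is that $D$ may be thin relative to $\Lambda$, and then $D$ has few lattice points while $nD$ has many, in any basis. Step 3 is asserted without proof and then abandoned. (Incidentally, the number of translates needed there would be $\binom{n+m-1}{n}$, which is polynomial in $n$ rather than $O_K(1)^n$, but it still gives the wrong exponent.)

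The bound that is true in your setting comes from the box, not from $|D\cap\Lambda|$, and it needs no convex geometry. Once $2$-properness gives a Freiman $2$-isomorphic copy $A'\subseteq\prod_{i=1}^r[0,L_i]$ of $A$ with $d'\le\dim_F(A)$, proceed as follows. Let $V$ be the direction space of $\operatorname{aff}(A')$, and choose a set $I$ of $d'$ coordinates such that the coordinate projection $\pi_I$ is injective on $V$. Since $nA'$ lies in a translate of $V$, $\pi_I$ is injective on $nA'$, and $\pi_I(nA')\subseteq\prod_{i\in I}[0,nL_i]$. Hence
\[
|nA|\le|nA'|\le\prod_{i\in I}(nL_i+1)\le n^{d'}\size(P)\le C_K\,n^{\dim_F(A)}|A|.
\]
Put this in place of steps 3 and 4. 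You also need a precise reference for upgrading a proper progression to a $2$-proper one of rank at most $r$ and size $O_K(|A|)$; such embedding results are standard, see e.g. \cite{Ta-Vu06}. With both, your route gives a self-contained proof of the statement the paper takes as a black box.
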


\begin{proof}
	Let $\dim_F(A)=d$. We have $|A|\ge d+1$ and there exists a $d$-dimensional subset $B\subsetneq \R^d$ such that $A$ is Freiman $2$-isomorphic to $B$. It follows from Lemma \ref{d2}.1 that 
	\[
	|A+A|=|B+B|\ge (d+1)|B|-{d+1\choose 2}=(d+1)|A|-{d+1\choose 2}\,,
	\] 
	
	whence $$\frac{|A+A|}{|A|}\ge (d+1)-\frac{d(d+1)}{2|A|}\ge d+1-\frac{d}{2}=\frac{d+2}{2}\,.$$

	1. If $|2A|<K|A|$, then $K>\frac{|A+A|}{|A|}\ge \frac{d+2}{2}$ and hence $d<2K-2$.

	2. If $|2A|\le K|A|$, then $K\ge \frac{|A+A|}{|A|}\ge \frac{d+2}{2}$ and hence $d\le 2K-2$. 
	It remains to show there exists a constant $C_K$ only depending on $K$ such that $|nA|\le C_Kn^{d}|A|$.
	
	If $|A|=1$, the assertion is trivial. Suppose $|A|\ge 2$. It follows from \cite[Proposition 1.3]{Gr-Ta06} or \cite{Ch02} that there exists a proper $r$-dimensional progression 
	\[
	P=\left\{ x_0 + \sum_{i=1}^r \mu_i x_i  \colon \mu_i \in [0, L_i]\right\}
	\]
	with $r\le d$ and $|P|=\size(P)=(L_1+1)\cdot\ldots (L_r+1)\le C_K|A|$ such that $A\subsetneq P$, where $C_K$ is a constant depending only on $K$. Then 
	\begin{align*}
	|nA|\le |nP|&=|\{nx_0+\sum_{i=1}^r \mu_i x_i  \colon \mu_i \in [0, nL_i]\}|\\
	&\le (nL_1+1)\cdot\ldots\cdot (nL_r+1)\\
	&\le n^r(L_1+1)\cdot\ldots (L_r+1)\\
	&\le C_Kn^d|A|\,.
	\end{align*}
 \end{proof}

The final lemma in this section investigates the subsets with $|2A|<K|A|$ (resp. $|2A|\le K|A|$) such that $\dim_F(A)$ is maximal.
\begin{lemma}\label{4.1}
	Let $K>2$ and let $A$ be a finite and nonempty subset of $\Z$ with $\dim_F(A)=d\ge 1$ and $|A|=m$.
	\begin{enumerate}
		\item If $|2A|<K|A|$  and $d=\lceil 2K-2 \rceil -1$, then $$|2A|=(d+1)m-{d+1\choose 2} \quad \text{ and }\quad m\le \left\lceil \frac{d(d+1)}{2(d+1-K)} \right\rceil-1\,.$$
		
		\item If $|2A|\le K|A|$  and $d=\lfloor 2K-2 \rfloor$, then  $$|2A|=(d+1)m-{d+1\choose 2}\quad \text{ and }\quad m\le \left\lfloor \frac{d(d+1)}{2(d+1-K)} \right\rfloor\,.$$
	\end{enumerate}
\end{lemma}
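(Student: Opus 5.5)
The bound on $m$ will follow quickly once the identity for $|2A|$ is known, so the plan is to prove the identity first. Take a $d$-dimensional $B\subsetneq\Z^d$ Freiman $2$-isomorphic to $A$. As in the proof of Lemma \ref{2.3}, Lemma \ref{d2}.1 gives
\[
|2A|=|2B|\ge (d+1)m-{d+1\choose 2}.
\]
So the content of the identity is the reverse inequality, and this is where maximality of $d$ has to enter. In both cases $d+1$ is a forbidden Freiman dimension:
\begin{itemize}
\item in case (1), $d+1=\lceil 2K-2\rceil\ge 2K-2$, which contradicts Lemma \ref{2.3}.1 for any set with $|2A'|<K|A'|$;
\item in case (2), $d+1>2K-2$, which contradicts Lemma \ref{2.3}.2 for any set with $|2A'|\le K|A'|$.
\end{itemize}
It therefore suffices to prove the following. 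If $B$ is $d$-dimensional with $|2B|>(d+1)m-{d+1\choose 2}$, then there is a $(d+1)$-dimensional $B'\subsetneq\Z^{d+1}$ with $|B'|=m$ and $|2B'|\le|2B|$.

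Any finite subset of $\Z^{d+1}$ is Freiman $2$-isomorphic to a subset of $\Z$, via a sufficiently generic linear projection. Such a $B'$ then yields $A'\subsetneq\Z$ with
\[
|A'|=m,\qquad \dim_F(A')\ge d+1,\qquad |2A'|\le |2A|.
\]
In particular $A'$ satisfies the same doubling hypothesis as $A$, which is the desired contradiction.

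For the construction of $B'$ I would try lifting one point. Pick $b\in B$, put $B_0=B\setminus\{b\}$, and let
\[
B'=(B_0\times\{0\})\cup\{(b,1)\}.
\]
Since $m\ge d+2$ when $B$ is not extremal, $B_0$ still spans affinely, so $B'$ is $(d+1)$-dimensional. Its sumset splits by last coordinate:
\[
|2B'|=|2B_0|+|B_0+b|+1=|2B_0|+m.
\]
So I need a point $b$ whose translate $b+B$ contributes at least $m$ sums not lying in $2B_0$, i.e. $|2B|-|2B_0|\ge m$. I would argue by induction on $m$, using the quantity
\[
E(B)=|2B|-(d+1)|B|+{d+1\choose 2}\ge 0.
\]
The inductive claim is that either $b$ contributes at least $m$ new sums, or $E(B_0)<E(B)$. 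In the latter case I recurse on $B_0$ and reinsert $b$ at the end.

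I expect this step to be the main obstacle. For a general $b$ the new sums can number as few as about $d+1$, not $m$. If single-point lifting does not work, the fallback is to lift a whole subset $S\subset B$ to height $1$, for instance the points on one side of a hyperplane or one fibre of a projection. Then
\[
|2B'|=|2(B\setminus S)|+|(B\setminus S)+S|+|2S|,
\]
and I would compare this with $|2B|$ using a compression argument in the spirit of Freiman's proof of Lemma \ref{d2}.1, with Lemma \ref{d2}.2 identifying the sets where no saving is possible.

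Once the identity $|2A|=(d+1)m-{d+1\choose 2}$ is established, the bound on $m$ is routine. In case (1), $|2A|<Km$ gives $(d+1-K)m<\frac{d(d+1)}{2}$. Here $d+1\ge 2K-2>K$ because $K>2$, so we may divide and obtain
\[
m<\frac{d(d+1)}{2(d+1-K)},\qquad\text{hence}\qquad m\le\left\lceil\frac{d(d+1)}{2(d+1-K)}\right\rceil-1 .
\]
In case (2) the non-strict inequality gives $m\le\frac{d(d+1)}{2(d+1-K)}$. Here $d+1>2K-2>K$, again using $K>2$, and taking the floor gives the stated bound.
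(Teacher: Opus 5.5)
There is a genuine gap, exactly at the step you flagged: $B'$ is never constructed, and the claim you reduce to is false as stated. By Lemma~\ref{d2}.1, every $(d+1)$-dimensional $B'$ of size $m$ has $|2B'|\ge (d+2)m-\binom{d+2}{2}$. This exceeds $(d+1)m-\binom{d+1}{2}+1$ as soon as $m\ge d+3$. So any $d$-dimensional $B$ with $m\ge d+3$ whose doubling exceeds the minimum by exactly one is a counterexample. For instance, take $d=2$, $m\ge 5$ and $B=\{0,e_1,\dots,(m-3)e_1,(m-1)e_1,e_2\}$. Then $|2B|=3m-2$, while every $3$-dimensional set of size $m$ has at least $4m-6>3m-2$ sums. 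Hence neither the single-point lifting, the induction on $E(B)$, nor the subset-lifting fallback can succeed in the generality you state. Your reduction uses the doubling hypothesis only to forbid dimension $d+1$, and drops it exactly where it is needed. Restricted to sets satisfying the hypothesis, the claim is vacuously true, but proving that is just the lemma again.

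The missing idea is that the hypothesis on $K$ already squeezes $|2A|$ by integrality, with no structural input. From $d=\lceil 2K-2\rceil-1$ you get $K\le\frac{d+3}{2}$; in case 2, $d=\lfloor 2K-2\rfloor$ gives $K<\frac{d+3}{2}$.

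If $m=d+1$, the set is a simplex and $|2A|=\binom{d+2}{2}$ is already the minimum.

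If $m\ge d+2$, then $\frac{d-1}{2}m\ge\frac{(d-1)(d+2)}{2}=\binom{d+1}{2}-1$, that is, $\frac{d+3}{2}m\le (d+1)m-\binom{d+1}{2}+1$. Therefore
\[
(d+1)m-\binom{d+1}{2}\le |2A|<Km\le \frac{d+3}{2}m\le (d+1)m-\binom{d+1}{2}+1 ,
\]
with $|2A|\le Km<\frac{d+3}{2}m$ in case 2. Since $|2A|$ is an integer, it equals the lower bound. This is the paper's argument.

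Your derivation of the bound on $m$ is correct. Note, though, that it needs only the lower bound from Lemma~\ref{d2}.1, not the identity.
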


\begin{proof}
	Since $\dim_F(A)=d$, we have $m=|A|\ge d+1$ and there exists a $d$-dimensional subset $B\subsetneq \R^d$ such that $A$ is Freiman $2$-isomorphic to $B$. It follows from Lemma \ref{d2}.1 that 
	\begin{equation}\label{e2}
	|A+A|=|B+B|\ge (d+1)|B|-{d+1\choose 2}=(d+1)m-{d+1\choose 2}\,.
	\end{equation}
	
	\medskip
	\noindent{\bf Claim A. }{\it If $m=d+1$, then $|A+A|={ d+2\choose 2}=(d+1)m-{d+1\choose 2}$.}
	\begin{proof}[Proof of Claim A]
		Suppose $m=d+1$.  Then those $d+1$ points are affinely independent, whence all ${ d+2\choose 2}$ unordered sums from $A+A$ are distinct, that is  $$|A+A|={ d+2\choose 2}=(d+1)m-{d+1\choose 2}\,.$$
		\qedhere[End of Claim A]
	\end{proof}

	\medskip
	\noindent{\bf Claim B. }{\it If $m\ge d+2$, then $\frac{d+3}{2}m\le (d+1)m-\frac{d(d+1)}{2}+1$.}
	\begin{proof}[Proof of Claim B]
		Suppose $m\ge d+2$. Then 
		\[
		\frac{d(d+1)}{2}-1=\frac{(d+2)(d-1)}{2}\le \frac{m(d-1)}{2}=(d+1)m-\frac{d+3}{2}m\,,
		\]
		whence $\frac{d+3}{2}m\le (d+1)m-\frac{d(d+1)}{2}+1$.
		\qedhere[End of Claim B]
	\end{proof}
	
	1. Assume that  $|2A|<K|A|$  and $d=\lceil 2K-2 \rceil -1$. Then $\frac{d+2}{2}<K\le \frac{d+3}{2}$ and the inequality $|2A|<K|A|$ together with \eqref{e2} implies that $$m\le \left\lceil \frac{d(d+1)}{2(d+1-K)} \right\rceil-1\,.$$
	If $m=d+1$, then  $|A+A|=(d+1)m-{d+1\choose 2}$ follows from Claim A.
	Suppose $m\ge d+2$.  Since $K\le \frac{d+3}{2}$, it follows from Claim B and \eqref{e2} that
	\[
	(d+1)m-{d+1\choose 2}\le |A+A|<K|A|\le \frac{d+3}{2}m\le (d+1)m-\frac{d(d+1)}{2}+1\,.
	\]
	In view of the fact that $|A+A|$ is an integer, we obtain $$|A+A|=(d+1)m-\frac{d(d+1)}{2}\,.$$
	
	2. Assume that $|2A|\le K|A|$  with $d=\lfloor 2K-2 \rfloor$. Then $\frac{d+2}{2}\le K< \frac{d+3}{2}$ and the inequality $|2A|\le K|A|$ together with \eqref{e2} implies that $$m\le \left\lfloor \frac{d(d+1)}{2(d+1-K)} \right\rfloor\,.$$
	If $m=d+1$, then  $|A+A|=(d+1)m-{d+1\choose 2}$ follows from Claim A.
	Suppose $m\ge d+2$.  Since $K<\frac{d+3}{2}$, it follows from Claim B and \eqref{e2} that 
	\[
	(d+1)m-{d+1\choose 2}\le |A+A|\le K|A|< \frac{d+3}{2}m\le (d+1)m-\frac{d(d+1)}{2}+1\,.
	\]
	In view of the fact that $|A+A|$ is an integer, we obtain $|A+A|=(d+1)m-\frac{d(d+1)}{2}$. 
\end{proof}

\section{The subset $A$ with $(\dim_F(A), |A|)$ suitable}\label{new}

\begin{definition}
We say a pair $(d,m)\in \N^2$ is suitable if one of the following holds.
\begin{itemize}

	\item $d\ge 2$ and either $m=d+1$ or $m=d+2$;
	
	\item $d=2$ and $m=5$.
\end{itemize}
\end{definition}

The following lemma shows that the subsets with $|2A|<K|A|$ (resp. $|2A|\le K|A|$) such that $\dim_F(A)$ is maximal has the property that $(\dim_F(A), |A|)$ is suitable (in view of Lemma \ref{4.1}).
\begin{lemma}\label{suitable}
		Let $K>2$ be real.
	\begin{enumerate}
		\item If $d=\lceil 2K-2 \rceil -1$ and $d+1\le m\le \left\lceil \frac{d(d+1)}{2(d+1-K)} \right\rceil-1$, then the pair $(d,m)$ is suitable.
		
		\item If  $d=\lfloor 2K-2 \rfloor$ and $d+1\le m\le \left\lfloor \frac{d(d+1)}{2(d+1-K)} \right\rfloor$, then the pair $(d,m)$ is suitable.
	\end{enumerate}
\end{lemma}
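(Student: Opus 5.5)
This is a pure arithmetic verification. Write $d$ for the relevant dimension and $M$ for the upper bound on $m$. We show that $M\le d+2$ in all cases except $d=2$, where $M\le 5$. Together with $m\ge d+1$, this gives suitability. We also need $d\ge 2$.

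\emph{Step 1: range of $K$ in terms of $d$.}
In case (1), $d=\lceil 2K-2\rceil-1$ gives $\frac{d+2}{2}<K\le\frac{d+3}{2}$, as noted in the proof of Lemma \ref{4.1}. Since $K>2$, this forces $d\ge 2$.
In case (2), $d=\lfloor 2K-2\rfloor$ gives $\frac{d+2}{2}\le K<\frac{d+3}{2}$. Again $K>2$ gives $2K-2>2$, so $d\ge 2$.

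\emph{Step 2: bound on $\frac{d(d+1)}{2(d+1-K)}$.}
This quantity is increasing in $K$. In case (1) we have $d+1-K\ge d+1-\frac{d+3}{2}=\frac{d-1}{2}$, hence
\[
\frac{d(d+1)}{2(d+1-K)}\le \frac{d(d+1)}{d-1}=d+2+\frac{2}{d-1}.
\]
Therefore $M=\lceil\cdot\rceil-1\le \left\lceil d+2+\frac{2}{d-1}\right\rceil-1$.

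For $d\ge 4$, we have $0<\frac{2}{d-1}<1$, so this bound equals $d+2$.

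For $d=3$, the bound is $\lceil 6\rceil-1=5=d+2$. This uses the fact that $\frac{2}{d-1}=1$ exactly, so the ceiling is just $6$.

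For $d=2$, the bound is $\lceil 6\rceil-1=5$, which matches the exceptional suitable pair $(2,5)$.

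In case (2), $K<\frac{d+3}{2}$ gives the strict inequality $\frac{d(d+1)}{2(d+1-K)}<d+2+\frac{2}{d-1}$. Taking the floor, we get $M\le d+2$ when $d\ge 4$. When $d=3$, the quantity is $<6$, so $M\le 5=d+2$. When $d=2$, the quantity is $<6$, so $M\le 5$.

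\emph{Step 3: conclusion.}
Combining Steps 1 and 2 with $m\ge d+1$: either $d\ge 3$ and $m\in\{d+1,d+2\}$, or $d=2$ and $m\in\{3,4,5\}$. All of these pairs are suitable.

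The only delicate point is the boundary handling of ceilings and floors. In case (1) at $d=3$ and $d=2$, the value $\frac{2}{d-1}$ is an integer, so one must check that $\lceil x\rceil-1$ with $x\le 6$ still gives at most $5$; it does.
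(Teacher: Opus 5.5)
Your proof is correct and follows the paper's argument: you derive $d\ge 2$ and $K\le\frac{d+3}{2}$ (resp.\ $K<\frac{d+3}{2}$), then bound $\frac{d(d+1)}{2(d+1-K)}$ by $\frac{d(d+1)}{d-1}=d+2+\frac{2}{d-1}$. The only difference is cosmetic: the paper uses $\lceil x\rceil-1<x$ to get the strict bound $m<d+2+\frac{2}{d-1}\le d+3$ for $d\ge 3$ (and $m<6$ for $d=2$) in one step, which avoids your separate ceiling checks at $d=3$ and $d=2$.
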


\begin{proof}
1. 	Since $K>2$ and $d=\lceil 2K-2 \rceil -1$, we have $d\ge 2$ and $K\le (d+3)/2$,  whence
$$m\le \left\lceil \frac{d(d+1)}{2(d+1-K)} \right\rceil-1< \frac{d(d+1)}{2(d+1-K)}\le \frac{d(d+1)}{d-1}\,.$$

 If $d=2$, then $m<6$ and hence $(d,m)$ is suitable.
If $d\ge 3$, then $$m<\frac{d(d+1)}{d-1}=d+2+\frac{2}{d-1}\le d+3$$ and hence $(d,m)$ is suitable.

2. 	Since $K>2$ and $d=\lfloor 2K-2 \rfloor$, we have $d\ge 2$ and  $K< (d+3)/2$, whence
$$m\le \left\lfloor \frac{d(d+1)}{2(d+1-K)} \right\rfloor\le \frac{d(d+1)}{2(d+1-K)}<\frac{d(d+1)}{d-1}\,.$$ If $d=2$, then $m<6$ and hence $(d,m)$ is suitable.
If $d\ge 3$, then $$m<\frac{d(d+1)}{d-1}=d+2+\frac{2}{d-1}\le d+3$$ and hence $(d,m)$ is suitable.
\end{proof}

\begin{definition}\label{C}
	For each suitable pair $(d,m)$, we define a set $\mathcal C_{d,m}$, whose elements are $d$-dimensional subsets with size $m$ of $\Z^d$. Let $(e_1,\ldots,e_d)$ be a basis of $\Z^d$ and set
	\begin{itemize}
		
		\item $C_{d,d+1}=\{0, e_1,\ldots,e_d\}$ and $\mathcal C_{d,d+1}=\{C_{d,d+1}\}$, when $d\ge 2$;
		
		\item  $C_{d,d+2}=\{0, e_1, 2e_1,e_2,\ldots,e_d\}$, $C_{d,d+2}^*=\{0, e_1,e_2,e_1+e_2,e_3,\ldots,e_d\}$, \\and $\mathcal C_{d,d+2}=\{C_{d,d+2}, C_{d,d+2}^*\}$, when $d\ge 2$;
		
		\item $C_{2,5}=\{0,e_1,2e_1,3e_1,e_2\}$, $C_{2,5}^*=\{0,e_1,2e_1,e_2,e_1+e_2\}$, and $\mathcal C_{2,5}=\{C_{2,5}, C_{2,5}^*\}$. 
	\end{itemize}
\end{definition}

Those new defined subsets have the following nice property.
\begin{lemma}\label{le3.2}
Suppose $(d,m)$ is suitable and let $B\in \mathcal C_{d,m}$. For every $n\ge 1$, we have 
\[
|nB|={n+d-1\choose d}|B|-{n+d-1\choose d-1}(n-1)\,.
\]
\end{lemma}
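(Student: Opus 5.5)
The most economical route is to show that each $B\in\mathcal C_{d,m}$ is one of the equality configurations listed in Lemma \ref{d2}.2, and then to handle the trivial case $n=1$ by hand. For $n=1$ the right-hand side is ${d\choose d}|B|-{d\choose d-1}\cdot 0=|B|$, so there is nothing to prove. For $n\ge 2$, every $B\in\mathcal C_{d,m}$ is $d$-dimensional and $d\ge 2$, so the equality criterion of Lemma \ref{d2}.2 applies directly. It therefore suffices to check, for each of the five sets, that it falls under (a), (b) or (c).

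The sets fall under the cases of Lemma \ref{d2}.2 as follows.
\begin{itemize}
\item $C_{d,d+1}$ is just the vertex set of the standard simplex. It is covered by (a) with the trivial progression $\{0,e_1\}$ on an edge, which adds no new points.
\item $C_{d,d+2}=\{0,e_1,2e_1,e_2,\dots,e_d\}$ is the vertex set of the simplex $\operatorname{conv}\{0,2e_1,e_2,\dots,e_d\}$ together with the arithmetic progression $0,e_1,2e_1$ on the edge $[0,2e_1]$. This is case (a).
\item $C_{2,5}=\{0,e_1,2e_1,3e_1,e_2\}$ is likewise case (a): take the triangle $\operatorname{conv}\{0,3e_1,e_2\}$ and the progression $0,e_1,2e_1,3e_1$ on its edge.
\item $C_{2,5}^*=\{0,e_1,2e_1,e_2,e_1+e_2\}$ is case (c) with $q=d=2$. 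Its convex hull is the trapezoid with vertices $0,2e_1,e_2,e_1+e_2$. This is a $2$-dimensional simplex-prism over the $1$-simplex $[0,1]$ (the $e_2$-coordinate), with $\ell(x)=0$ and $u(x)=2-x$ in the $e_1$-direction. Its vertical edges are $[0,2e_1]$ and $[e_2,e_1+e_2]$, and with $w=e_1$ the progressions on them give exactly the five points.
\item $C_{d,d+2}^*=\{0,e_1,e_2,e_1+e_2,e_3,\dots,e_d\}$ is also case (c). Its hull is an iterated pyramid, with apexes $e_3,\dots,e_d$, over the unit square $\operatorname{conv}\{0,e_1,e_2,e_1+e_2\}$. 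The square is a $2$-dimensional simplex-prism with $\ell=0$ and $u=1$. Taking $w=e_1$, the progressions on its vertical edges are just the endpoints, so $B$ is exactly the vertex set.
\end{itemize}

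The main obstacle is making sure that these identifications genuinely match the precise wording of \cite{Bo-Sa-Se14}. In particular, the degenerate progressions in (a), the requirement $q\ge 2$, and the reading of ``vertical edge'' all need checking. If some borderline case, such as $C_{d,d+1}$ or the square prism, is not literally covered, I will instead verify the formula by a direct count.

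For $C_{d,d+1}$, $nB$ consists of the lattice points $x\ge 0$ with $\sum x_i\le n$, so $|nB|={n+d\choose d}$. The identity ${n+d-1\choose d}(d+1)-{n+d-1\choose d-1}(n-1)={n+d\choose d}$ then follows from Pascal's rule together with $d{n+d-1\choose d}=n{n+d-1\choose d-1}$. For $C_{d,d+2}^*$, $nB$ is the set of $x\ge 0$ with $\max(x_1,x_2)+x_3+\cdots+x_d\le n$. Summing over $t=\max(x_1,x_2)$ gives $|nB|=\sum_{t=0}^n(2t+1){n-t+d-2\choose d-2}$, which can be compared with the target by routine binomial manipulation.
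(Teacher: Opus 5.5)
Your proposal is correct and follows the same route as the paper. The case $n=1$ is immediate, and for $n\ge 2$ each $B\in\mathcal C_{d,m}$ is an equality configuration of Lemma \ref{d2}.2, falling under cases (a) and (c). The paper only asserts that this is easy to check. Your explicit identifications are accurate, including $C_{2,5}^*$ as a simplex-prism with $\ell=0$, $u=2-x$, so the fallback direct counts are not needed.
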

\begin{proof}
If $n=1$, then it is trivial. Suppose $n\ge 2$. Since $(d,m)$ is suitable, we have $d\ge 2$. It is easy to check that $B$ satisfies one of the cases of Lemma \ref{d2}.2 and hence the assertion follows.
\end{proof}

%
%
%

\begin{proposition}\label{3.4}
	Let $A\subsetneq \Z$ be a finite and nonempty subset with $\dim_F(A)=d$ and $|A|=m$ such that the pair $(d,m)$ is suitable and  $|2A|=(d+1)m-{d+1\choose 2}$.
	\begin{enumerate}
		\item 	There exist $B\in \mathcal C_{d,m}$, 	$a\in \Z$, and a homomorphism $\lambda\colon \Z^d\rightarrow \Z$ such that 
		\[
		A=a+\lambda(B)\,.
		\]
		\item For every positive integer $n$, we have 
		\[
		|nA|=|\lambda(nB)|\le |nB|={n+d-1\choose d}|A|-{n+d-1\choose d-1}(n-1)\,.
		\]
		
		\item Equality holds in the above inequality if and only if $\lambda$ is injective on $nB$, or equivalently, $\ker \lambda\cap (nB-nB)=\{0\}$. 
	\end{enumerate}

\end{proposition}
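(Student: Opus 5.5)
Since $\dim_F(A)=d$, we fix a $d$-dimensional set $B'\subsetneq\Z^d$ and a Freiman $2$-isomorphism $\phi\colon B'\to A$. Then $|2B'|=|2A|=(d+1)m-{d+1\choose 2}$, which is exactly the equality case of Lemma \ref{d2}.1 with $n=2$. As $d\ge 2$ for a suitable pair, Lemma \ref{d2}.2 applies and puts $B'$ into one of the three families (a), (b), (c). The plan is to use $m\in\{d+1,d+2\}$, or $(d,m)=(2,5)$, to show that in each family $B'$ is an affine image of some member of $\mathcal C_{d,m}$.

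For $m=d+1$, $B'$ is the vertex set of a $d$-simplex, so an affine map $T(x)=b_0+L(x)$, with $L$ linear and invertible over $\R$, sends $C_{d,d+1}$ onto $B'$. For $m=d+2$ we argue by family. In (a) the single extra point is the midpoint of an edge, which gives $C_{d,d+2}$. Family (b) needs three midpoints, so it is impossible. In (c), a $q$-dimensional simplex-prism has $2(q+1)$ vertices. The count $2(q+1)+(d-q)\le d+2$ forces $q=2$ with no interior points on the vertical edges, but a triangular prism together with the apexes has $d+3$ vertices, again too many. So the only remaining case is the degenerate situation in which the prism collapses to a quadrilateral face, which gives $C^*_{d,d+2}$. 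Here I must recheck how Lemma \ref{d2} counts a prism over a $1$-simplex: a $1$-simplex prism is a trapezoid, and the equality condition forces it to be a parallelogram, that is, an affine image of $\{0,e_1,e_2,e_1+e_2\}$.

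For $(d,m)=(2,5)$, family (a) gives either two interior points on an edge (the set $C_{2,5}$) or a simplex with one midpoint, which has only $4$ points. Family (b) gives $6$ points, which is too many. Family (c) with $q=1$ gives a trapezoid whose edge progressions have the same difference $w$. With $5$ points this is a parallelogram with one vertical edge of length $2w$, up to affine equivalence $\{0,e_1,2e_1,e_2,e_1+e_2\}=C^*_{2,5}$; some care is needed here because the two parallel sides of the trapezoid have lengths $w$ and $2w$. I expect this case analysis, and matching it exactly to the cases of \cite{Bo-Sa-Se14}, to be the main obstacle.

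Once $B'=T(B)$ with $B\in\mathcal C_{d,m}$, we set $\psi=\phi\circ T$. This is a Freiman $2$-homomorphism from $B$ to $\Z$. Since $B$ contains $0$ and each $e_i$, we can define $\lambda(e_i)=\psi(e_i)-\psi(0)$ and $a=\psi(0)$. For $C_{d,d+1}$ this already gives $\psi=a+\lambda$ on $B$. For the other sets, the additive relations inside $B$, namely $0+2e_1=e_1+e_1$, $0+(e_1+e_2)=e_1+e_2$, $e_1+3e_1=2e_1+2e_1$, and so on, together with the $2$-homomorphism property, force $\psi$ to be affine on $B$. Hence $A=a+\lambda(B)$, which is part (1). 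Moreover $nA=na+\lambda(nB)$, so $|nA|=|\lambda(nB)|\le|nB|$, and Lemma \ref{le3.2} evaluates $|nB|$; this is part (2). For part (3), equality holds iff $\lambda$ is injective on $nB$. By linearity, $\lambda(x)=\lambda(y)$ for $x,y\in nB$ iff $x-y\in\ker\lambda$, so injectivity on $nB$ is equivalent to $\ker\lambda\cap(nB-nB)=\{0\}$.
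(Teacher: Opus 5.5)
Your proposal is correct and follows the paper's proof. It applies the equality case of Lemma \ref{d2} at $n=2$, does a case analysis on $m$ to land in $\mathcal C_{d,m}$, uses the internal additive relations of $B$ to force the Freiman isomorphism to be affine, and then uses Lemma \ref{le3.2} and linearity of $\lambda$ for parts (2) and (3). One thing needs tidying: in the paper's convention a $q$-dimensional simplex-prism sits over a $(q-1)$-simplex and has $2q$ vertices (not $2(q+1)$), so family (c) gives at least $d+q\ge d+2$ points, with equality only for $q=2$ and no interior points, which is a parallelogram; for $(d,m)=(2,5)$ the figure is a trapezoid with parallel sides $w$ and $2w$, not a parallelogram.
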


\begin{proof}
	Since $\dim_F(A)=d$, there exists a $d$-dimensional subset $C\subsetneq \Z^d$ such that $A$ is Freiman $2$-isomorphic to  $C$. Then $|C|=|A|=m$ and $|2C|=|2A|=(d+1)m-{d+1\choose 2}$. Since $(d,m)$ is suitable, the set $C_{d,m}$ is well-defined and $d\ge 2$. We proceed by the following claim.
	
	\medskip
	\noindent{\bf Claim A. }{\it There exists $B\in C_{d,m}$ such that  $C$ is Freiman $2$-isomorphic to $B$.}
	\medskip
	
	\begin{proof}[Proof of Claim A]
		Note that $d\ge 2$. We apply Lemma \ref{d2}.2 for $n=2$ and obtain that $A$ has one of the following forms.

		\begin{enumerate}
			\item[(a)]The set of the vertices of a simplex together with an arithmetic progression on one edge, including the
			endpoints;

			\item[(b)]The set of the vertices of a simplex together with the three edge midpoints of a two-dimensional face;
			
			\item[(c)] The set of the vertices of  an iterated pyramid over a $q$-dimensional simplex-prism, together with arithmetic progressions with one common difference on all vertical edges, including the
			endpoints, where $d\ge q\ge 2$.
		\end{enumerate}
		
		\medskip
	Since $(d,m)$ is suitable, we may distinguish three cases.
		
		If $m = d + 1$, then $A$ is a $d$-dimensional simplex and hence Freiman $2$-isomorphic to  $C_{d,d+1}$.
		
		Suppose $m = d + 2$. In case (a), we have $A$ is the set of the vertices of a simplex together with  exactly one additional point on one edge. Since the
		points on that edge form an arithmetic progression including both endpoints, the additional point is the midpoint, implying $A$ is Freiman $2$-isomorphic to  $C_{d,d+2}$. Case (b) has $d + 4$ points and is impossible. In case (c), a $q$-dimensional simplex-prism has $2q$ vertices, and the iterated pyramid contributes $d - q$ new vertices. Thus
		there are at least $d + q$ points. Since  $m=d + 2\ge q+d$ forces $q = 2$ and forbids interior points on either vertical edge, and since the vertical progressions have a common difference, the $2$-dimensional simplex-prism is a
		parallelogram, whence  after adding the pyramid vertices, $A$ is Freiman $2$-isomorphic to $C_{d,d+2}^*$.
		
		Suppose $(d, m) = (2, 5)$. Case (a) gives four equally spaced points on one edge of a triangle
		and the opposite vertex, implying $A$ is Freiman $2$-isomorphic to  $C_{2,5}$. Case (b) has six points, which is impossible. In case (c), the inequality $d\ge q\ge 2$ implies that $\operatorname{conv}(A)$ is a $2$-dimensional simplex-prism, and hence  $A$ is the union of  two vertical arithmetic progressions with common difference, implying $A$ is Freiman $2$-isomorphic to $C_{2,5}^*$.
		
		\qedhere[End of Claim A]
	\end{proof}

	\medskip
	\noindent{\bf Claim B. }{\it There exist $B\in C_{d,m}$, $a\in \Z$, and a homomorphism $\lambda\colon \Z^d\rightarrow \Z$ such that   $nA=na+\lambda(nB)$ for every positive integer $n$.}
	\medskip
	
	\begin{proof}[Proof of Claim B]
		Since $A$ is Freiman $2$-isomorphic to  $C$, it follows from Claim A that $A$ is Freiman $2$-isomorphic to some $B\in C_{d,m}$. Let $\psi$ be the Freiman $2$-isomorphism from $B$ to $A$.
		
		Note that $(e_1,\ldots,e_d)$ is a basis of $\Z^d$. In view of the structure of $B$, we set $a=\psi(0)$ and set $\lambda\colon \Z^d\rightarrow \Z$ be the only homomorphism satisfying $\lambda(e_i)=\psi(e_i)-a$ for every $i\in \{1,\ldots,d\}$. We want to show $\psi(x)=a+\lambda(x)$ for every $x\in B$. 
		
		Since $\psi$ is a Freiman $2$-isomorphism, it follows from $0+(e_1+e_2)=e_1+e_2$ that 
		\begin{align*}
			\psi(e_1+e_2)&=\psi(e_1)+\psi(e_2)-\psi(0)=\lambda(e_1)+\lambda(e_2)+a=\lambda(e_1+e_2)+a
		\end{align*} 
		and it follows from $(j-1)e_1+e_1=je_1+0$  that 
		\begin{align*}
		\psi(je_1)&=\psi((j-1)e_1)+\psi(e_1)-\psi(0)=\psi((j-1)e_1)+\lambda(e_1) \text{ for every }j\ge 1\,,
		\end{align*}
		whence  $\psi(je_1)=\psi(0)+j\lambda(e_1)=a+\lambda(je_1)$ for every $j\ge 1$. 
		
		In view of the structure of $B$  again, we obtain that  $\psi(x)=a+\lambda(x)$ for every $x\in B$ and hence $A=\psi(B)=a+\lambda(B)$. Since $\lambda$ is a homomorphism, it follows that $$nA=na+n\lambda(B)=na+\lambda(nB)$$ for every $n\ge 1$. 
		\qedhere[End of Claim B]
	\end{proof}
	
 By Claim B, there exist $B\in C_{d,m}$, $a\in \Z$, and a homomorphism $\lambda\colon \Z^d\rightarrow \Z$ such that   $A=a+\lambda(B)$ and hence 
 \[
 nA=na+\lambda(nB) \text{ for every }n\ge 2\,.
 \]
 It follows from Lemma \ref{le3.2} that
	 $$|nA|=|\lambda(nB)|\le |nB|={n+d-1\choose d}|B|-{n+d-1\choose d-1}(n-1)\,.$$
	 Moreover,	$|\lambda(nB)|= |nB|$ if and only if $\lambda$ is injective on $nB$, or equivalently, $$\ker \lambda\cap (nB-nB)=\{0\}\,. $$	 
\end{proof}

\section{Proofs of main Theorems}

\begin{lemma}\label{injective}
	Let $S\subsetneq \Z^d$ be a finite and nonempty subset and let $C$ be a constant. Suppose for every $x=(x_1,\ldots,x_d)\in S$, we have $|x_i|\le C$  for each $i$. If the integer $R> 2C+1$, then the map $\lambda_R$ defined by
	\[
	\lambda_R(x)=x_1+Rx_2+\ldots+R^{d-1}x_d
	\]
	is injective on $S$.
\end{lemma}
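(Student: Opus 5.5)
The plan is to argue by contradiction, using the uniqueness of base-$R$ expansions with balanced digits. Suppose $x,y\in S$ are distinct with $\lambda_R(x)=\lambda_R(y)$. Set $z=x-y\in\Z^d$. Then $z\neq 0$, each coordinate satisfies $|z_i|\le |x_i|+|y_i|\le 2C$, and by linearity of $\lambda_R$
\[
z_1+Rz_2+\ldots+R^{d-1}z_d=0\,.
\]
Since $z\ne 0$, I let $j$ be the smallest index with $z_j\neq 0$.

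If $j=d$, the relation reduces to $R^{d-1}z_d=0$. This forces $z_d=0$, a contradiction.

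If $j<d$, I divide the relation by $R^{j-1}$ to get
\[
z_j=-R\,(z_{j+1}+Rz_{j+2}+\ldots+R^{d-j-1}z_d)\,.
\]
So $z_j$ is a nonzero multiple of $R$, which gives $|z_j|\ge R$. On the other hand $|z_j|\le 2C<R-1<R$, a contradiction. Here I use only the hypothesis $R>2C+1$, and in fact $R>2C$ would already suffice.

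This shows $\lambda_R(x)\neq\lambda_R(y)$ whenever $x\neq y$, so $\lambda_R$ is injective on $S$. No step is a real obstacle. The only care needed is to bound the coordinates of the difference $x-y$ by $2C$, rather than bounding the coordinates of $x$ and $y$ separately by $C$.
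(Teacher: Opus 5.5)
Your proof is correct, but it reaches the contradiction differently from the paper. Both arguments reduce to a nonzero $z=x-y$ with $|z_i|\le 2C$ and $z_1+Rz_2+\ldots+R^{d-1}z_d=0$.

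\textbf{Paper's route.} It takes the \emph{largest} index $j$ with $z_j\neq 0$ and compares sizes: $R^{j-1}\le |z_j|R^{j-1}=|z_1+Rz_2+\ldots+R^{j-2}z_{j-1}|\le 2C(1+R+\ldots+R^{j-2})<\frac{2C}{R-1}R^{j-1}$. It concludes via $\frac{2C}{R-1}<1$, which is exactly where $R>2C+1$ is used.

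\textbf{Your route.} You take the \emph{smallest} such index and argue by divisibility: $R\mid z_j$ while $0<|z_j|\le 2C<R$.

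\textbf{Comparison.} Your $R$-adic argument is slightly shorter and needs no geometric-series estimate. As you note, it works under the weaker hypothesis $R>2C$. The paper's archimedean argument uses integrality only through $|z_j|\ge 1$, so it does not need $R$ itself to be an integer. For the paper's application, injectivity of some $\lambda$ on $B\cup 2B\cup nB$, either version suffices.
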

\begin{proof}
	Assume to the contrary that $\lambda_R$ is not injective on $S$. Then there exist $x,y\in S$ with $x\neq y$ such that $\lambda_R(x)=\lambda_R(y)$. Set $z=(z_1,\ldots, z_d)=x-y\neq 0$. Then $|z_i|\le 2C$ for each $i$. Let $j\in [1,d]$ be maximal such that $z_j\neq 0$. It follows from $\lambda_R(x)-\lambda_R(y)=\lambda_R(z)=0$ that $j\ge 2$ and
	\begin{align*}
	R^{j-1}&\le |z_j|R^{j-1}=|z_1+Rz_2+\ldots+R^{j-1}z_{j-2}|\\
	&\le 2C(1+R+\ldots+R^{j-2})<\frac{2C}{R-1}R^{j-1}<R^{j-1}\,,
	\end{align*}
	a contradiction.
\end{proof}

Now we are ready to prove our main theorems.
\begin{proof}[Proof of Theorem \ref{main1.1}]
	1.  Suppose $K\le 2$.  Let $A\subsetneq \Z$ be a finite and nonempty subset with $|2A|<K|A|\le 2|A|$. It follows from  Lemma \ref{d=1}  that $|2A|=2|A|-1$, whence $A$ is an arithmetic progression and $$|nA|=n|A|-(n-1)\,,.$$ Furthermore, $2|A|-1=|2A|<K|A|$ implies that 
	$|A|<\frac{1}{2-K}$ and hence 
	\[
	|A|\le \left\lceil \frac{1}{2-K} \right\rceil-1\,, \text{ when } K<2\,.
	\]

	If $K\le 3/2$, then $|A|=1$ and hence $f(n, K)=\sup \left\{\frac{|n\{a\}|}{|\{a\}|}\colon a\in \Z\right\}=1$ for all $n$.
	
	If $3/2<K<2$, then $|A|\le M$, where $M=\left\lceil \frac{1}{2-K} \right\rceil-1$, and 
	\[
	f(n,K)=\sup\left\{\frac{n|A|-(n-1)}{|A|}\colon |A|\le M\right\}=(1-1/M)n+1/M \text{ for all }n\,.
	\]
	
	If $K=2$, then $f(n,K)=\lim_{|A|\rightarrow \infty}\frac{n|A|-(n-1)}{|A|}=n$ for all $n$.

	\medskip
    
    2. Suppose $K>2$. Let $A\subsetneq \Z$ be a finite and nonempty subset with $|2A|<K|A|$ and let $d=\lceil 2K-3\rceil$. Then Lemma \ref{2.3}.1 implies that $\dim_F(A)\le d$. If $\dim_F(A)\le d-1$, then Lemma \ref{2.3}.2 implies that $$|nA|\le C_Kn^{d-1}|A|$$ for some constant $C_K$ depending only on $K$. 
    If $\dim_F(A)=d$, then Lemmas \ref{4.1}.1 and \ref{suitable}.1 imply that the pair $(d, |A|)$ is suitable,
    \[
    |A|\le M\ \text{ and }\ |2A|=(d+1)|A|-{d+1\choose 2}\,, \text{where $M=\left\lceil\frac{d(d+1)}{2(d+1-K)} \right\rceil-1$.}
    \] 
    It follows from Proposition \ref{3.4}.2 that $|nA|\le {n+d-1\choose d}|A|-{n+d-1\choose d-1}(n-1)$ and hence 
    \begin{align*}
    \frac{|nA|}{|A|}&\le {n+d-1\choose d}-{n+d-1\choose d-1}\frac{n-1}{|A|}\\
    &\le {n+d-1\choose d}-{n+d-1\choose d-1}\frac{n-1}{M}\\
    &=\frac{M-d}{d!M}n^d+O_K(n^{d-1})\,.
    \end{align*}
     Since $C_Kn^{d-1}\le \frac{M-d}{d!M}n^d+O_K(n^{d-1})$ for large enough $n$, we obtain that $$f(n,K)\le  {n+d-1\choose d}-{n+d-1\choose d-1}\frac{n-1}{M} \quad \text{for large enough $n$.}$$ 
    
    By Lemma \ref{suitable}, we have that $(d, M)$ is suitable and hence Lemma \ref{le3.2} implies that there exists $B\in \mathcal C_{d,M}$ with $|B|=M$ such that 
    $$|nB|={n+d-1\choose d}|B|-{n+d-1\choose d-1}(n-1) \text{ for all }n\,.$$
    It follows that $|2B|=(d+1)|B|-{d+1\choose 2}<K|B|$ and $$\frac{|nB|}{|B|}={n+d-1\choose d}-{n+d-1\choose d-1}\frac{n-1}{M}\,.$$ By Lemma \ref{injective}, there exists a homomorphism $\lambda\colon \Z^d\rightarrow \Z$ such that $\lambda$ is injective on $B\cup 2B\cup nB$. Let $A_0=\lambda(B)\subsetneq \Z$. Then 
 $$|2A_0|=|2\lambda(B)|=|2B|=(d+1)|B|-{d+1\choose 2}<K|B|=K|A_0|$$ and hence $$f(n,K)\ge \frac{|nA_0|}{|A_0|}=\frac{|nB|}{|B|}={n+d-1\choose d}-{n+d-1\choose d-1}\frac{n-1}{M}\,,$$
     whence 
     \[
     \text{$f(n,K)= {n+d-1\choose d}-{n+d-1\choose d-1}\frac{n-1}{M}$\quad  for large enough $n$. }
     \]
\end{proof}

We first prove a special case of Theorem \ref{main1.2}.

\begin{lemma}\label{special}
	Let 
	\[
	\widebar f(n,2)=\sup \left\{\frac{|nA|}{|A|}\colon A\subsetneq \Z \text{ is a finite nonempty subset with }|2A|\le 2|A|\right\}\,.
	\]
 Then $\widebar f(n,2)=\frac{(n+1)(n+2)}{6}$ for all $n$ and if $A\subsetneq \Z$ with $|2A|\le 2|A|$  such that  $|n_0A|/|A|> n_0$ for some $n_0\ge 3$, then $|A|=3$.

\end{lemma}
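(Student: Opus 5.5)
The plan is a case analysis on $\dim_F(A)$, which by Lemma \ref{2.3}.2 (with $K=2$) satisfies $\dim_F(A)\le 2$. The lower bound $\widebar f(n,2)\ge \frac{(n+1)(n+2)}{6}$ will come from an explicit three-element set. For the upper bound I will show $\frac{|nA|}{|A|}\le n$ whenever $|A|\neq 3$, and $\frac{|nA|}{|A|}\le\frac{(n+1)(n+2)}{6}$ when $|A|=3$. Since $\frac{(n+1)(n+2)}{6}\ge n$ for every $n\ge 1$, with strict inequality for $n\ge 3$, both the value of $\widebar f(n,2)$ and the structural claim follow.

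\emph{Step 1 ($|A|=3$).} Write $A=\{a_1,a_2,a_3\}$. Then $nA$ is the image of the set of multisets of size $n$ from $A$, so $|nA|\le \binom{n+2}{2}$ and $\frac{|nA|}{|A|}\le\frac{(n+1)(n+2)}{6}$. For the lower bound, take $B=C_{2,3}=\{0,e_1,e_2\}$. Lemma \ref{injective} gives a $\lambda$ injective on $2B\cup nB$, and $A_0=\lambda(B)$ (for instance $\{0,1,R\}$ with $R$ large) satisfies
\[
|2A_0|=6=2|A_0| \quad\text{and}\quad |nA_0|=\binom{n+2}{2}.
\]
This already yields $\widebar f(n,2)=\frac{(n+1)(n+2)}{6}$, once the other cases are bounded by $n$.

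\emph{Step 2 ($|A|\ne 3$, $\dim_F(A)=2$).} By Lemma \ref{d2}.1, $|2A|\ge 3|A|-3$. Combined with $|2A|\le 2|A|$ this gives $|A|\le 3$. Since a $2$-dimensional set needs at least three points, $|A|=3$, so this case is already covered by Step 1.

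\emph{Step 3 ($|A|\ne 3$, $\dim_F(A)\le 1$).} The cases $|A|\le 2$ are trivial: $A$ is an arithmetic progression and $\frac{|nA|}{|A|}\le n$. So assume $k=|A|\ge 4$. By Lemma \ref{simple}, $|2A|\in\{2k-1,2k\}$.
\begin{itemize}
\item If $|2A|=2k-1$, then $A$ is an arithmetic progression, and Lemma \ref{d=1} gives $|nA|=nk-(n-1)\le nk$.
\item If $|2A|=2k\le 3k-4$, I will invoke Freiman's $3k-4$ theorem, which is a standard result, not proved in the paper. After normalising (translating and dividing by $\gcd(A-A)$), it says that $A$ lies in an arithmetic progression $P$ of length $|2A|-k+1=k+1$. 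Hence $A=P\setminus\{p\}$ with $p$ not an endpoint of $P$. Normalise $P=[0,k]$.
\end{itemize}

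\emph{Main obstacle: the case $A=[0,k]\setminus\{p\}$.} Crude containment only gives $nA\subseteq[0,nk]$, that is $|nA|\le nk+1$, which is one too many. The fix is to use the hypothesis $|2A|=2k$ to locate $p$. If $2\le p\le k-2$, then $2A=[0,2k]$ has $2k+1$ elements, contradicting $|2A|=2k$. Hence $p\in\{1,k-1\}$, and by symmetry $p=1$. Then $1\notin nA$, because every element of $A$ other than $0$ is at least $2$. Therefore $|nA|\le nk$, i.e.\ $\frac{|nA|}{|A|}\le n$.

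\emph{Conclusion.} In all cases with $|A|\neq 3$ we have $\frac{|nA|}{|A|}\le n$, which gives $\widebar f(n,2)=\frac{(n+1)(n+2)}{6}$. Moreover, if $\frac{|n_0A|}{|A|}>n_0$ for some $n_0\ge 3$, then $A$ cannot fall under Step 3, so $|A|=3$.
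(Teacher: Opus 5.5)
Your proposal is correct and follows essentially the same route as the paper: the multiset count $|nA|\le\binom{n+|A|-1}{n}$ for $|A|\le 3$, Freiman's $3k-4$ theorem for $|A|\ge 4$ to get $|nA|\le n|A|$, and a three-point set from $\mathcal C_{2,3}$ mapped into $\Z$ via Lemma~\ref{injective} for the lower bound. Your Step 2 on $\dim_F(A)$ is harmless but unnecessary, since Step 3 never uses $\dim_F(A)\le 1$; your argument that forces the missing element to be $p\in\{1,k-1\}$ and then excludes $1$ from $nA$ supplies the detail the paper leaves implicit when it asserts $|nA|\le n|A|$.
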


\begin{proof}
Let $A\subsetneq \Z$ be a finite and nonempty subset with $|2A|\le 2|A|$. If $|A|\ge 4$, then Freiman's $3k-4$ Theorem (see \cite{Na96} or \cite{Gr13}) implies that $A$ is obtained by deleting one term close to one of the ends from an arithmetic progression, whence $|nA|\le n|A|$ and \begin{equation}\label{s1}
\frac{|nA|}{|A|}\le n\le \frac{(n+1)(n+2)}{6} \text{ when }|A|\ge 4
\end{equation} for all $n$. Since the number of unordered sums from  $nA$ is at most ${n+|A|-1\choose n}$, it is easy to see that
\begin{equation}\label{s2}
\frac{|nA|}{|A|}\le \frac{1}{|A|}{|A|+n-1\choose n}\le \frac{(n+1)(n+2)}{6} \text{ when } |A|\le 3\,.
\end{equation}
 Put all together, we obtain that 
\[
\widebar f(n,2)\le \frac{(n+1)(n+2)}{6}\,.
\]
Let $B\in \mathcal C_{2,3}$. Then $|B|=3$ and $|nB|/|B|=\frac{(n+1)(n+2)}{6}$ (in view of Lemma \ref{le3.2}).
 By Lemma \ref{injective},
there exists a homomorphism $\lambda\colon \Z^2\rightarrow \Z$ such that $\lambda$ is injective on $B\cup 2B\cup nB$. Let $A_0=\lambda(B)\subsetneq \Z$. Then 
$$|2A_0|=|2\lambda(B)|=|2B|=6< 3|A_0|$$ and hence $$\widebar f(n,2)\ge \frac{|nA_0|}{|A_0|}=\frac{|nB|}{|B|}=\frac{(n+1)(n+2)}{6}\,.$$
Therefore 
\[
\text{$\widebar f(n,2)= \frac{(n+1)(n+2)}{6}$\quad  for all $n$. }
\]

If $A\subsetneq\Z$ with $|A+A|\le 2|A|$	and $|n_0A|>n_0|A|$ for some $n_0\ge 3$, then \eqref{s1} and \eqref{s2} imply that $|A|=3$. 
\end{proof}

\bigskip
\begin{proof}[Proof of Theorem \ref{main1.2}]
	1. If $K=2$, then the assertion follows from Lemma \ref{special}.
	Suppose $K< 2$. Let  $A\subsetneq \Z$ be a finite and nonempty subset with $|2A|\le K|A|< 2|A|$. Then Lemma \ref{d=1} implies $2|A|-1\le |2A|\le K|A|<2|A|$, whence 
	\[
	    |2A|=2|A|-1 \text{ and } |A|\le \left\lfloor \frac{1}{2-K}\right\rfloor\,.
	\]
	By Lemma \ref{d=1} again, we have $A$ is an arithmetic progression and hence \[
	\text{$|nA|=n|A|-(n-1)$ for all $n$.}
	\]
	
	If $K<3/2$, then $|A|\le \left\lfloor \frac{1}{2-K}\right\rfloor=1$, whence $|A|=1$. It follows that 
	\[
	\text{$\widebar f(n,K)=1$ for all $n$ when $K<3/2$. }
	\] 
	
	If $3/2\le K<2$, then $\frac{|nA|}{|A|}=n-\frac{n-1}{|A|}$ with $|A|\le M$, where $M=\left\lfloor \frac{1}{2-K}\right\rfloor$, whence 	
	\begin{align*}
	\widebar f(n,K)&=\sup\left\{\frac{n|A|-(n-1)}{|A|}\colon |A|\le M\right\}\\
	&=(1-1/M)n+1/M \qquad \text{ for all }n\,.
	\end{align*}

	2.  Suppose $K> 2$. Let $A\subsetneq \Z$ be a finite and nonempty subset with $|2A|\le K|A|$, and let
	\[
	\text{ $d=\lfloor 2K-2\rfloor$ \quad  and \quad  $M=\left\lfloor\frac{d(d+1)}{2(d+1-K)} \right\rfloor$.}
	\]
	 Then Lemma \ref{2.3}.2 implies that $\dim_F(A)\le d$ and if $\dim_F(A)\le d-1$, then $$|nA|\le C_Kn^{d-1}|A|$$ for some constant $C_K$ depending only on $K$.

	If $\dim_F(A)=d$, then Lemmas \ref{4.1}.2 and \ref{suitable}.2 imply that the pair $(d, |A|)$ is suitable,
	\[
	|A|\le M\quad \text{ and }\quad |2A|=(d+1)|A|-{d+1\choose 2}.
	\] 
	It follows from Proposition \ref{3.4}.2 that $|nA|\le {n+d-1\choose d}|A|-{n+d-1\choose d-1}(n-1)$ and hence 
	\begin{align*}
		\frac{|nA|}{|A|}&\le {n+d-1\choose d}-{n+d-1\choose d-1}\frac{n-1}{|A|}\\
		&\le {n+d-1\choose d}-{n+d-1\choose d-1}\frac{n-1}{M}\\
		&=\frac{M-d}{d!M}n^d+O_K(n^{d-1})\,,
	\end{align*}
 Since $C_Kn^{d-1}\le \frac{M-d}{d!M}n^d+O_K(n^{d-1})$ for large enough $n$, we obtain that $$\widebar{f}(n,K)\le  {n+d-1\choose d}-{n+d-1\choose d-1}\frac{n-1}{M} \quad \text{for large enough $n$.} $$
	
	In view of Lemma \ref{suitable}, we have $(d, M)$ is suitable and hence Lemma \ref{le3.2} implies that there exists $B\in \mathcal  C_{d,M}$ with $|B|=M$ such that 
	$$|nB|={n+d-1\choose d}|B|-{n+d-1\choose d-1}(n-1)\,.$$
	It follows that $|2B|=(d+1)|B|-{d+1\choose 2}\le K|B|$ and $$\frac{|nB|}{|B|}={n+d-1\choose d}-{n+d-1\choose d-1}\frac{n-1}{M}\,.$$ By Lemma \ref{injective}, there exists a homomorphism $\lambda\colon \Z^d\rightarrow \Z$ such that $\lambda$ is injective on $B\cup 2B\cup nB$. Let $A_0=\lambda(B)\subsetneq \Z$. Then 
	$$|2A_0|=|2\lambda(B)|=|2B|=(d+1)|B|-{d+1\choose 2}\le K|B|=K|A_0|$$ and hence $$\widebar f(n,K)\ge \frac{|nA_0|}{|A_0|}=\frac{|nB|}{|B|}={n+d-1\choose d}-{n+d-1\choose d-1}\frac{n-1}{M}\,.$$
	Therefore 
	\[
	\text{$\widebar f(n,K)= {n+d-1\choose d}-{n+d-1\choose d-1}\frac{n-1}{M}$\quad  for large enough $n$. }
	\]
\end{proof}

\begin{proof}[Proof of Theorem \ref{main1.3}] Since the proofs for Items 1 and 2 are similar, we only prove the first one.
	
	Let  $C_K$ be the constant defined in Lemma \ref{2.3} and let
	\[
	C_1^*=\frac{M-d}{d!M}\quad \text{ and }\quad C_2^*=
		\frac{1}{(d-1)!M(M-1)}\,.
	\] 
	Recall $f(n,K)=\frac{M-d}{d!M}n^d+O_K(n^{d-1})$ in Theorem \ref{main1.1}.2.
	Choose $\eta_K<\frac{1}{2}\min\{C_1^*, C_2^*\}$. Then $$f(n,K)-\eta_Kn^d=\left(\frac{M-d}{d!M}-\eta_K\right)n^d+O_K(n^{d-1})\,.$$ Since $\frac{M-d}{d!M}-\eta_K>0$, there exists $N_K^*$ such that for all $n\ge N_K^*$, we have 
	\begin{equation}\label{N_K^*}
	f(n,K)-\eta_Kn^d\ge C_Kn^{d-1}\,.
	\end{equation}
	Since $${n+d-1\choose d-1}\frac{n-1}{M(M-1)}=\frac{1}{(d-1)!M(M-1)}n^d+O_K(n^{d-1})$$ and $\eta_K<\frac{1}{(d-1)!M(M-1)}$, there exists $N_K'$ such that for all $n\ge N_K'$, we have 
	\begin{equation}\label{N_K'}
	\eta_Kn^d<{n+d-1\choose d-1}\frac{n-1}{M(M-1)}\,.
	\end{equation}
	 Choose $N_K=\max\{N_K^*, N_K'\}$. 
	 Let $A\subsetneq \Z$ be a finite and nonempty subset such that
	 \[
	 \text{ $|A+A|<K|A|$\quad  and\quad  $\frac{|nA|}{|A|}\ge f(n,K)-\eta_Kn^d$\quad  for some $n\ge N_K$. }
	 \]
	  If $\dim_F(A)\le d-1$, then Lemma \ref{2.3}.2 and \eqref{N_K^*} imply that $$|nA|\le C_Kn^{d-1}|A|\le f(n,K)|A|-\eta_Kn^{d}|A|\,,$$
	 a contradiction to the assumption of $\frac{|nA|}{|A|}$. By Lemma  \ref{2.3}.1, we obtain $$\dim_F(A)=d\,.$$
	In view of Lemma \ref{4.1}.1, we have $$|2A|=(d+1)|A|-{d+1\choose 2}$$ and $|A|\le M$.
	Then Proposition \ref{3.4}.2 implies that $$|nA|\le {n+d-1\choose d}|A|-{n+d-1\choose d-1}(n-1)\,.$$
	If $|A|\le M-1$, then 
	\begin{align*}
	\eta_Kn^d\ge f(n,K)-\frac{|nA|}{|A|}&\ge {n+d-1\choose d-1}\frac{(n-1)(M-|A|)}{|A|M}\\
	&\ge {n+d-1\choose d-1}\frac{n-1}{M(M-1)}\,,
	\end{align*}
a contradiction to \eqref{N_K'}, whence $$|A|=M\,.$$ In view of Lemma \ref{suitable}, the pair $(d,M)$ is suitable and it follows from Proposition \ref{3.4}.1 that there exists  $B_n\in \mathcal C_{d,m}$, 	$a_n\in \Z$, and a homomorphism $\lambda\colon \Z^d\rightarrow \Z$ such that 
\[
A=a_n+\lambda(B_n)\,,
\]
whence 
\[
\eta_Kn^d\ge f(n,K)-\frac{|nA|}{|A|}\ge \frac{|nB_n|}{|B_n|}-\frac{|\lambda(nB_n)|}{|B_n|}\,.
\]
\end{proof}

\section{Katz's problem}

One problem closely related to Problem \ref{p1} is Katz’s Problem(\cite[Problem 3.17]{problems}), which asks whether there is a finite subset $A\subsetneq\Z$ with $|2A|=K|A|$ such that every proper subset $A_0\subsetneq A$ satisfies $|nA_0|\ge f(n,K)|A_0|$, where $f(n,K)$ is defined in Theorem \ref{main1.1}. The following result gives an answer to this problem.
\begin{theorem}
	Let $K>1$ be a real number and let $n\ge 2$ be an integer. Suppose $A\subsetneq \Z$ be a finite and nonempty subset with $|A+A|=K|A|$ such that every nonempty subset $A_0\subsetneq A$ satisfies $|nA_0|\ge f(n,K)|A_0|$, where $f(n,K)$ is defined in Theorem \ref{main1.1}. Then $K=3/2$ and $|A|=2$.
\end{theorem}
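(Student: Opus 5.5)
The idea is to test the hypothesis on singletons $A_0=\{a\}\subsetneq A$, where $|nA_0|/|A_0|=1$. This forces $f(n,K)\le 1$, and we then combine that with the exact value of $K=|2A|/|A|$.

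First, $|A|\ge 2$. If $|A|=1$, then $|2A|=|A|$ gives $K=1$, which is excluded by $K>1$. (Alternatively, $A$ then has no nonempty proper subset, and the conclusion would fail; the intended reading is $|A|\ge2$, which the formula $K=|2A|/|A|$ forces anyway.) So $A$ has a nonempty proper subset $A_0=\{a\}$. The hypothesis gives $1=|nA_0|\ge f(n,K)$.

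Next we bound $f(n,K)$ from below. If $K>3/2$, take a two-element arithmetic progression $P=\{0,1\}$. Then $|2P|=3<K\cdot 2$, so $P$ is admissible in the definition of $f(n,K)$. Hence $f(n,K)\ge |nP|/|P|=(n+1)/2>1$ for $n\ge 2$, a contradiction. This is consistent with Theorem \ref{main1.1}.1: for $3/2<K<2$ we have $M\ge 2$ and $f=(1-1/M)n+1/M>1$. For $K\ge 2$ the same set $P$ works. Therefore $K\le 3/2$.

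Then we use $|2A|=K|A|$ together with Lemma \ref{d=1}, which gives $|2A|\ge 2|A|-1$. Thus $K|A|\ge 2|A|-1$, that is, $|A|(2-K)\le 1$. Since $K\le 3/2$, this gives $|A|\le 1/(2-K)\le 2$, so $|A|=2$. For a two-element set $|2A|=3$, hence $K=3/2$.

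As a check, $K=3/2$ and $|A|=2$ indeed satisfy the hypothesis, since $f(n,3/2)=1$ by Theorem \ref{main1.1}.1 and every proper subset is a singleton. No step here is a real obstacle. The only point needing care is the degenerate case $|A|=1$, which is excluded by $K>1$ as shown at the start.
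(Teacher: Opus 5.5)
Your proposal is correct and follows the paper's proof essentially step for step: you test the hypothesis on a singleton to get $f(n,K)\le 1$, use $\{0,1\}$ to rule out $K>3/2$, and then apply Lemma \ref{d=1} to force $|A|=2$ and $K=3/2$. The only cosmetic difference is that you bound $|A|\le 1/(2-K)\le 2$ before computing $K$, whereas the paper closes with the chain $3/2\ge K\ge 2-1/|A|\ge 3/2$.
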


\begin{proof}
	Since $K>1$, it follows from $|A+A|=K|A|>|A|$ that $|A|\ge 2$. Let $a\in A$. Then $\{a\}\subsetneq A$ and hence   $$1=|n\{a\}|\ge f(n,K)|\{a\}|=f(n,K)\,.$$

	Suppose $K>3/2$. Let $B=\{0,1\}$. Then $|B+B|=\frac{3}{2}|B|<K|B|$ and hence $f(n,K)\ge \frac{|nB|}{|B|}=\frac{n+1}{2}>1$, a contradiction.

	Suppose $K\le 3/2$. In view of Lemma \ref{d=1}, we have $K|A|=|2A|\ge 2|A|-1$ and hence 
	\[
	3/2\ge K\ge 2-\frac{1}{|A|}\ge 3/2\,. 
	\]
	It follows that all equalities hold, whence $K=3/2$ and $|A|=2$.
\end{proof}

\section*{Acknowledgement}
 This work was supported by the National Natural Science Foundation of China (Grant No.~12571578) and by Xidian University (Grant No.~21103260003).

\providecommand{\bysame}{\leavevmode\hbox to3em{\hrulefill}\thinspace}
\providecommand{\MR}{\relax\ifhmode\unskip\space\fi MR }
\providecommand{\MRhref}[2]{%
	\href{http://www.ams.org/mathscinet-getitem?mr=#1}{#2}
}


\begin{thebibliography}{1}





\bibitem{Bo-Sa-Se14}K.~J.~ Böröczky, F.~Santos, O.~Serra,\emph{On sumsets and convex hull}, Discrete \& Computational Geometry, \textbf{52}(2014), 705-729.

\bibitem{Bo-Ch04}
J. Bourgain and M.-C. Chang, \emph{On the size of $k$-fold sum and product sets of integers},
J. Amer. Math. Soc. \textbf{17}(2004), no. 2, 473--497.

\bibitem{Br-Gr-Ta13}
E. Breuillard, B. Green, and T. Tao, \emph{Small doubling in groups},
in Erd\H{o}s Centennial, Bolyai Soc. Math. Stud. \textbf{25},
J\'anos Bolyai Math. Soc., Budapest, 2013, 129--151.

\bibitem{Ch02}
M.-C. Chang, \emph{A polynomial bound in Freiman's theorem},
Duke Math. J. \textbf{113}(2002), no. 3, 399--419.

\bibitem{problems}E.~Croot and V.~Lev (collectors), \emph{Problems presented at the workshop on Recent Trends in Additive Combinatorics}, Palo Alto Workshop, 2004. Available at \url{https://aimath.org/WWN/additivecomb/additivecomb.pdf}.

\bibitem{CrootLev2007}
E.~Croot and V.~F.~Lev,
\emph{Open problems in additive combinatorics},
in \emph{Additive Combinatorics}, CRM Proceedings \& Lecture Notes
\textbf{43}, American Mathematical Society, Providence, RI, 2007,
207--233.


	
\bibitem{El-Ma22}
S.~Eliahou and E.~Mazumdar,
\emph{Iterated sumsets and Hilbert functions},
J. Algebra \textbf{593} (2022), 274--294.


\bibitem{El-Ma25}
S.~Eliahou and E.~Mazumdar,
\emph{Optimal bounds on the growth of iterated sumsets in abelian semigroups},
Ann. Inst. Fourier (Grenoble) \textbf{75} (2025), no.~6, 2321--2339.

\bibitem{Fr73}
G. A. Freiman, \emph{Foundations of a Structural Theory of Set Addition},
Transl. Math. Monogr. \textbf{37}, Amer. Math. Soc., Providence, RI, 1973.



\bibitem{Go-Gr-Ma-Ta25}
W. T. Gowers, B. Green, F. Manners, and T. Tao, \emph{On a conjecture of Marton},
Ann. of Math. (2) \textbf{201}(2025), no. 2, 515--549.

\bibitem{Gr-Sh-Wa23}
A.~Granville, G.~Shakan, and A.~Walker,
\emph{Effective results on the size and structure of sumsets},
Combinatorica \textbf{43} (2023), 1139--1178.


\bibitem{Gr-Ru06}
B. Green and I. Z. Ruzsa, \emph{Sets with small sumset and rectification},
Bull. Lond. Math. Soc. \textbf{38}(2006), no. 1, 43--52.


\bibitem{Gr-Ta06}B.~Green and T.~Tao, \emph{Compressions, convex geometry and the Freiman-Bilu theorem}, Quart. J. Math. \textbf{57}(2006), 495-504.




\bibitem{Gr13}
D. J. Grynkiewicz, \emph{Structural Additive Theory},
Developments in Mathematics \textbf{30}, Springer, Cham, 2013.

\bibitem{He25}
S.~Hegde,
\emph{An inverse and a stability result for Ruzsa's inequality on triple sumsets},
preprint (2025),
\href{https://arxiv.org/abs/2510.20073}{arXiv:2510.20073}.



\bibitem{Kh92}
A. G. Khovanski\u{\i}, \emph{Newton polyhedron, Hilbert polynomial, and sums of finite sets},
Funct. Anal. Appl. \textbf{26}(1992), no. 4, 276--281.

\bibitem{Kh95}
A. G. Khovanski\u{\i}, \emph{Sums of finite sets, orbits of commutative semigroups, and Hilbert functions},
Funct. Anal. Appl. \textbf{29}(1995), no. 2, 102--112.

\bibitem{Na00}
M. B. Nathanson, \emph{Growth of sumsets in abelian semigroups},
Semigroup Forum \textbf{61}(2000), no. 1, 149--153.

\bibitem{Na96}
M. B. Nathanson, \emph{Additive Number Theory: Inverse Problems and the Geometry of Sumsets},
Grad. Texts in Math. \textbf{165}, Springer, New York, 1996.



\bibitem{Pe11}
G. Petridis, \emph{Plünnecke's inequality},
Combin. Probab. Comput. \textbf{20}(2011), no. 6, 921--938.

\bibitem{Pl70}
H. Plünnecke, \emph{Eine zahlentheoretische Anwendung der Graphtheorie},
J. Reine Angew. Math. \textbf{243}(1970), 171--183.

\bibitem{Ru99}
I. Z. Ruzsa, \emph{An analog of Freiman's theorem in groups},
Ast\'erisque \textbf{258}(1999), 323--326.

\bibitem{Ru94}
I. Z. Ruzsa, \emph{Generalized arithmetic progressions and sumsets},
Acta Math. Hungar. \textbf{65}(1994), no. 4, 379--388.

\bibitem{Sa12}
T. Sanders, \emph{On the Bogolyubov--Ruzsa lemma},
Anal. PDE \textbf{5}(2012), no. 3, 627--655.

\bibitem{Ta-Vu06}
T. Tao and V. Vu, \emph{Additive Combinatorics},
Cambridge Stud. Adv. Math. \textbf{105}, Cambridge Univ. Press, Cambridge, 2006.


\end{thebibliography}
\end{document}